\documentclass[letterpaper, 10pt, conference]{ieeeconf}

\usepackage{cite}
\usepackage{amsmath,amssymb,amsfonts}
\usepackage[scr = dutchcal]{mathalpha}
\usepackage{graphicx}
\usepackage{textcomp}
\usepackage{stmaryrd}
\usepackage{tabularx}
\usepackage{multirow}
\usepackage{float}
\usepackage{mathtools}
\usepackage{comment}

\usepackage{color}

\newcommand{\mbf}[1]{\mathbf{ #1}}
\newcommand{\tnf}[1]{\textnormal{#1}}
\newcommand{\tbf}[1]{\textbf{#1}}
\newcommand{\mbs}[1]{\boldsymbol{#1}}

\newcommand{\mcl}[1]{\mathcal{#1}}

\newcommand{\R}{\mathbb{R}}
\newcommand{\N}{\mathbb{N}}

\newcommand{\norm}[1]{\left\lVert{#1}\right\rVert}
\newcommand{\ip}[2]{\left\langle #1, #2 \right\rangle}

\newcommand{\bmat}[1]{\begin{bmatrix}#1\end{bmatrix}}

\newcommand{\mat}[1]{\begin{matrix}#1\end{matrix}}

\newcommand{\slbmat}[1]{{\small\left[\!\begin{array}{l}#1\end{array}\!\right]}}

\newcommand{\slmat}[1]{{\small\begin{array}{l}#1\end{array}}}

\newcommand{\sbrray}[2]{{{\scriptsize\left[\!\!\begin{array}{#1}#2\end{array}\!\!\right]}}}

\newtheorem{thm}{Theorem}
\newtheorem{defn}[thm]{Definition}
\newtheorem{lem}[thm]{Lemma}

\newtheorem{cor}[thm]{Corollary}

\floatstyle{ruled}
\newfloat{block}{thbp}{lop}
\floatname{block}{Block}

\let\bl\bigl
\let\bbl\Bigl
\let\bbbl\biggl

\let\br\bigr
\let\bbr\Bigr
\let\bbbr\biggr

\floatstyle{ruled}
\newfloat{block}{thbp}{lop}
\floatname{block}{Block}

\let\bl\bigl
\let\bbl\Bigl
\let\bbbl\biggl

\let\br\bigr
\let\bbr\Bigr
\let\bbbr\biggr

\usepackage{upgreek}

\newcommand\blfootnote[1]{%
	\begingroup
	\renewcommand\thefootnote{}\footnote{#1}%
	\addtocounter{footnote}{-1}%
	\endgroup
}

\title{\LARGE \bf
Verifying Well-Posedness of Linear PDEs using Convex Optimization
}

\author{Declan S. Jagt, Matthew M. Peet %
}

\begin{document}

	\maketitle
	\pagestyle{plain}

\begin{abstract}
Ensuring that a PDE model is well-posed is a necessary precursor to any form of analysis, control, or numerical simulation. Although the Lumer--Phillips theorem provides necessary and sufficient conditions for well-posedness of dissipative PDEs, these conditions must hold only on the domain of the PDE---a proper subspace of $L_{2}$---which can make them difficult to verify in practice. In this paper, we show how the Lumer--Phillips conditions for PDEs can be tested more conveniently using the equivalent Partial Integral Equation (PIE) representation. This representation introduces a fundamental state in the Hilbert space $L_{2}$ and provides a bijection between this state space and the PDE domain. Using this bijection, we reformulate the Lumer--Phillips conditions as operator inequalities on $L_{2}$. We show how these inequalities can be tested using convex optimization methods, establishing an upper bound on the exponential growth rate of solutions. We demonstrate the effectiveness of the proposed approach by verifying well-posedness for several classical examples of parabolic and hyperbolic PDEs.
\end{abstract}

\blfootnote{\vspace*{-0.00cm}%
	\tbf{Acknowledgement:} This work was supported by National Science Foundation grants 2337751 and 2429973. \vspace*{-0.25cm}}


\section{INTRODUCTION} 

Well-posedness is a fundamental property of any dynamic model, ensuring existence and regularity of solutions. For Ordinary Differential Equations (ODEs), the problem is relatively simple. Any linear ODE is well-posed, and even for nonlinear ODEs, existence of a non-increasing Lyapunov function is sufficient to establish this property. Well-posedness of Partial Differential Equations (PDEs), however, is more nuanced. Specifically, uniqueness of solutions necessitates the imposition of boundary conditions (and associated spatial Sobolev regularity). The well-posedness condition, then, requires that the Boundary Conditions (BCs) are sufficiently restrictive to avoid multiple solutions, while not being so tight as to preclude any solution. This process of ensuring that the BCs are compatible with the PDE is often difficult, requiring careful ad hoc analysis and integration of domain (BCs) and generator (PDE). In this paper, we propose a convex optimization algorithm for testing well-posedness, which applies to a large class of linear PDEs and BCs, while avoiding the need for extensive ad hoc analysis.

Let us consider a linear PDE of the form
\begin{equation}\label{eq:PDE_intro}
	\dot{\mbf{u}}(t)=A\mbf{u}(t),\quad t\geq 0,\qquad \mbf{u}(0)=\mbf{x},
\end{equation}
where $\mbf{u}(t)\in D$ denotes the state of the system, defined on a Sobolev subspace $D\subseteq H$ of a Hilbert space $H$, and where $A:D\to H$ is a linear differential operator. Many physical phenomena can be modeled by dynamical systems of this form, enabling analysis of system properties such as stability and facilitating the synthesis of controllers and observers.


A PDE model of the form in~\eqref{eq:PDE_intro} is well-posed if it admits a unique solution that depends continuously on the initial state, $\mbf{x}$~\cite{hadamard1902WellPosed}. A common framework for formulating well-posedness of PDEs is the theory of $C_{0}$-semigroups~\cite{curtain1995introduction}. These semigroups generalize the matrix exponential to infinite-dimensional settings, allowing solutions of~\eqref{eq:PDE_intro} to be characterized by the $C_{0}$-semigroup generated by $A$. Within this framework, the question of well-posedness is whether the operator $A$ generates a $C_{0}$-semigroup, and substantial research has been devoted to establishing generation theorems which provide conditions under which such a semigroup exists.

While necessary and sufficient conditions for generation of $C_{0}$-semigroups have been established in the form of the Hille–Yosida theorem\footnote{Also known as the Feller–Miyadera–Phillips theorem, for the more general formulation derived in~\cite{feller1952semigroup,miyadera1952generation,phillips1953semigroup}.}~\cite{hille1948functionalAnalysis,yosida1948semigroups}, verification of these conditions is challenging. Consequently, most well-posedness results use the stronger conditions proposed in the Lumer–Phillips theorem~\cite{lumerphillips1961dissipative}. These conditions are more closely related to the well-posedness test for nonlinear ODEs based on existence of a non-increasing Lyapunov function. Specifically, the Lumer--Phillips theorem states that $A$ generates a contraction semigroup (implying that the norm of solutions in the Hilbert space $H$ does not increase with time) if and only if $A$ satisfies $\ip{\mbf{x}}{A\mbf{x}}_{H}\leq 0$ for all $\mbf{x}\in D$, and $\lambda I-A:D\rightarrow H$ is surjective for some $\lambda>0$. Modifications of the Lumer–Phillips theorem have also been proposed for other classes of systems, including time-varying PDEs~\cite{schnaubelt2010TimeVarying,kurula2019TimeVarying} and partial differential–algebraic equations~\cite{sviridyuk2012PDAE,jacob2022solvabilityPDAE,gernandt2025DAEs}.

Although the Lumer--Phillips theorem is widely used to verify well-posedness of PDEs, application of this theorem still involves substantial analysis. For example, dissipativity is commonly verified using techniques such as integration by parts and the Cauchy--Schwarz inequality to establish that $\ip{\mbf{x}}{A\mbf{x}}_{H}\leq 0$ for all $\mbf{x}\in D$. Similarly, surjectivity of $\lambda I-A$ is frequently tested by explicitly inverting the operator $\lambda I-A$. Such arguments must generally be developed on a case-by-case basis, underscoring the need for a systematic, computational framework for verifying well-posedness. Developing such a framework, however, is complicated by the dependence of the operator $A:D\to H$ on the PDE domain $D$, for which no universal parameterization is available.

To address this challenge with developing a computational framework for well-posedness analysis of PDEs, we consider the Partial Integral Equation (PIE) representation. This PIE representation provides an equivalent state-space formulation of PDEs by establishing a bijection between the PDE domain and the Hilbert space $L_{2}$. Under suitable rank conditions on the BCs defining $D$ (see~\cite[Sec XIV.3, Thm 3.1]{gohberg1990LinearOperators}), such a bijection exists and has the form
\begin{equation*}
	(\mcl{T}\mbf{v})(s):=\int_{0}^{s}\mbs{T}_{1}(s,\theta)\mbf{v}(\theta)\, d\theta
	+\int_{s}^{1}\mbs{T}_{2}(s,\theta)\mbf{v}(\theta)\, d\theta,
\end{equation*}
where the kernels $\mbs{T}_{1}$ and $\mbs{T}_{2}$ are polynomial and can be computed explicitly from the BCs defining $D$. We refer to operators of this type as Partial Integral (PI) operators\footnote{PI operators are formally defined in Subsec.~\ref{subsec:LPI:LPI} and may be augmented with multiplier operators. Both with and without multipliers, the set of PI operators forms a *-algebra under composition, with analytic formulae for algebraic operations expressed using the associated kernels and multipliers.}. Using this bijection, $\mcl{T}:L_{2}\to D$, an equivalent representation of the PDE in~\eqref{eq:PDE_intro} can be established as a PIE,
\begin{equation*}
	\partial_{t}\mcl{T}\mbf{v}(t)=\mcl{A}\mbf{v}(t),\quad t\geq 0,
	\qquad \mbf{v}(0)=\mcl{T}^{-1}\mbf{x},
\end{equation*}
where $\mcl{A}:=A\circ\mcl{T}$ is also a PI operator. It has been shown that a broad class of linear ODE--PDE systems can be represented equivalently as PIEs of this form~\cite{shivakumar2024GPDE,jagt2025NDPIE_arXiv}, where $\mbf{u}$ satisfies the PDE if and only if $\mbf{v}=\mcl{T}^{-1}\mbf{u}$ satisfies the PIE.

Within the PIE framework, various analysis and control problems can be formulated as convex optimization problems over PI operator variables. Such optimization problems are termed Linear PI Inequalities (LPIs). Since PI operators may be parameterized by their polynomial kernels and multipliers, these LPIs can be solved using semidefinite programming methods through the PIETOOLS software suite~\cite{shivakumar2025PIETOOLS}.

Despite the growing use of the PIE representation for analysis and control of PDE systems, a corresponding generation theorem has not yet been formulated in the PIE framework. The challenge is that the operator $\mcl{T}$ on the left-hand side of the PIE dynamics prevents direct application of classical results such as the Lumer–Phillips theorem. In particular, while the condition $\ip{\mbf{u}}{A\mbf{u}}_{H}\leq 0$ can be readily tested in the PIE representation as an LPI, $\mcl{T}^*\mcl{A}+\mcl{A}^*\mcl{T}\preceq 0$, an analogous test for surjectivity of $\lambda I-A$ for $\lambda>0$ has not been developed. 
Although generation results do exist for descriptor systems, $\frac{d}{dt}E\mbf{x}(t)=A\mbf{x}(t)$~\cite{sviridyuk2012PDAE,jacob2022solvabilityPDAE,gernandt2025DAEs}, these results still assume $A$ to be defined on a domain $D$, leading to conditions that remain difficult to verify. This raises the challenge of using the bijective nature of $\mcl{T}$ and algebraic properties of $\{\mcl{T},\mcl{A}\}$ to formulate a simplified generation theorem, whose conditions can be tested numerically. 

In this paper, we address this challenge by deriving well-posedness conditions directly in the PIE representation and expressing them as LPIs. First, in Section~\ref{sec:PIEs}, we parameterize a class of 1D linear PDEs admitting an equivalent PIE representation, and pose well-posedness on $L_{2}$ in terms of the PIE. In Section~\ref{sec:LumerPhillips_PIE}, we then establish a Lumer--Phillips theorem for PIEs, yielding necessary and sufficient conditions for generation of a contraction semigroup. Finally, in Section~\ref{sec:LPI}, we express subsequent sufficient conditions for well-posedness in terms of an LPI, further establishing a bound on the exponential growth rate of solutions. We apply this test to several classical PDE examples in Section~\ref{sec:Examples}, using the PIETOOLS software to verify well-posedness and compute tight bounds on the growth rate of solutions.


\section{Preliminaries}

\subsection{Notation}\label{sec:preliminaries:notation}

$\R^{n \times m}[s]$ is the space of matrix-valued polynomials on $s$.
For $d,n\in\N$, denote by $L_{2}^{n}[a,b]$ the Hilbert space of $\R^{n}$-valued square-integrable functions on $[a,b]$ with the standard inner product $\ip{\cdot}{\cdot}_{L_{2}}$, and where we omit the domain when clear from context. Define the Sobolev subspace
\begin{equation*}
	W_{2}^{d,n}[a,b]:=\{\mbf{u}\in L_{2}^{n}\mid \partial_{s}^{k}\mbf{u}\in L_{2}^{n}[a,b],~\forall k\in\{1,\hdots,d\}\}.
\end{equation*}
For a real Hilbert space $H$, denote by $\mcl{L}(H)$ the space of bounded linear operators on $H$ (with operator norm induced by $\ip{\cdot}{\cdot}_{H}$), and let $I\in\mcl{L}(H)$ denote the identity operator. For coercive $\mcl{P}\in\mcl{L}(L_{2}^{n}[a,b])$, so that $\mcl{P}\succeq\epsilon I$ for some $\epsilon>0$, define the Hilbert space $L_{2,\mcl P}^{n}[a,b]$ to be $L_2^{n}[a,b]$, equipped with the weighted inner product $\ip{\mbf{x}}{\mbf{y}}_{L_{2,\mcl P}}:=\ip{\mbf{x}}{\mcl{P}\mbf{y}}_{L_{2}}$.
For $\mcl{P}\in\mcl{L}(H)$, denote by $\tnf{ran}(\mcl{P})$ and $\tnf{ker}(\mcl{P})$ the range and nullspace of $\mcl{P}$, respectively.

\subsection{Strongly Continuous Semigroups}

Well-posedness of solutions to linear PDEs as in~\eqref{eq:PDE_intro} will be verified using the theory of $C_{0}$-semigroups. For this, we recall the following definitions.
\begin{defn}[Infinitesimal Generator of $C_{0}$-Semigroup]
	For a Hilbert space $H$, a $\mbs{C_{0}}$\tbf{-semigroup} (or strongly continuous semigroup) on $H$ is a family $(S(t))_{t\geq 0}$ of operators in $\mcl{L}(H)$ with the following properties:
	\begin{enumerate}
		\item 
		$S(0)=I$;
		
		\item 
		$S(t+s)=S(t)S(s)$ for all $t,s\geq 0$;
		
		\item 
		$\lim_{t\to 0^{+}}S(t)\mbf{x}=\mbf{x}$ for all $\mbf{x}\in H$.
	\end{enumerate}
	For a $C_{0}$-semigroup $(S(t))_{t\geq 0}$, defining
	\begin{equation*}
		D:=\left\{\mbf{x}\in H\, \bbl|\, A\mbf{x}:=\lim_{t\to 0^{+}}\frac{1}{t}[S(t)-I]\mbf{x}\text{ exists in }H\right\},
	\end{equation*}
	we refer to $A:D\to H$ as the \tbf{infinitesimal generator} of $(S(t))_{t\geq 0}$. In this case, we write $e^{tA}:=S(t)$ for $t\geq0$.
\end{defn}

\section{Well-Posedness in the PIE Representation}\label{sec:PIEs}

The main contribution of this paper is a convex optimization program for verifying well-posedness of a class of linear PDEs. This optimization program will be established using the equivalent Partial Integral Equation (PIE) representation of the PDE, where if $D\subseteq W_{2}^{d,n}$, the PIE state is the $d^{\tnf{th}}$-order spatial derivative of the PDE state. In this section, we recall how this PIE representation may be constructed for a broad class of linear 1D PDEs. We then formulate the problem of well-posedness analysis in terms of the associated PIE representation.

\subsection{Bijection between PDE Domain and $L_2$}

We begin by defining a class of PDE domains compatible with the PIE representation.

\begin{defn}\label{defn:PDE_dom}
	For $n,d\in\N$, $[a,b]\subseteq\R$, and $B,C\in\R^{nd\times nd}$, define the associated domain $D\subseteq W_{2}^{d,n}[a,b]$ as
	\begin{equation*}
		D:=\bbbl\{\mbf{u}\in W_{2}^{d,n}[a,b]\,\bbbl|\
		\sum_{j=0}^{d-1}B_{i,j}\partial_{s}^{j}\mbf{u}(a)+C_{i,j}\partial_{s}^{j}\mbf{u}(b)=0\bbbr\},
	\end{equation*}
	where $B_{i,j},C_{i,j}\in\R^{n\times n}$ are concatenated as $B=[B_{i,j}]$ and $C=[C_{i,j}]$ for $i,j\in\{0,\hdots,d-1\}$.
	We say that $D$ is \tbf{PIE-compatible} if $K:=B+C\mbs{Q}(b-a)$ is invertible, where \\[-0.5\baselineskip]
	\begin{equation*}
		\mbs{Q}(z):={\small\bmat{I_{n}&zI_{n}&\cdots&\frac{z^{d-1}}{(d-1)!}I_{n}\\0_{n}&I_{n}&\ddots&\frac{z^{d-2}}{(d-2)!}I_{n}\\\vdots&\ddots&\ddots&\vdots\\0_{n}&0_{n}&\cdots&I_{n}}}.
	\end{equation*}
\end{defn}
\vspace*{2mm}

Most common boundary conditions (e.g., Dirichlet, Robin, mixed) yield PIE-compatible domains\footnote{The case of periodic boundary conditions is not prima facie PIE-compatible, but can be formulated in the PIE representation as in~\cite{jagt2025PIEperiodicBCs}.}. The following result, a corollary of~\cite[Sec~XIV.3,~Thm~3.1]{gohberg1990LinearOperators}, shows that if $D$ is PIE-compatible, then $\partial_{s}^{d}:D\to L_{2}^{n}[a,b]$ is invertible.

\begin{cor}\label{cor:Tmap}
	If $D\subseteq W_{2}^{d,n}[a,b]$ is PIE-compatible, then $\partial_{s}^{d}:D\to L_{2}^{n}[a,b]$ is invertible, and $\mcl{T}:=(\partial_{s}^{d})^{-1}:L_{2}^{n}[a,b]\to D$ takes the form $(\mcl{T}\mbf{v})(s):=\int_{a}^{b}\mbs{G}(s,\theta)\mbf{v}(\theta)\, d\theta,$
	where $\mbs{G}(s,\theta)=E_{1}^{T}\hat{\mbs{G}}(s,\theta)E_{d}$ for
	\begin{equation*}
		\hat{\mbs{G}}(s,\theta):=\begin{cases}
			\mbs{Q}(s-\theta)-\mbs{Q}(s-a)K^{-1}C\mbs{Q}(b-\theta),	&	\theta\leq s,\\
			-\mbs{Q}(s-a)K^{-1}C\mbs{Q}(b-\theta),	&	\theta>s,
		\end{cases}
	\end{equation*}
	for $\mbs{Q}(s)$, $K$, and $C$ as in Defn.~\ref{defn:PDE_dom}, and\\[-1\baselineskip]
	\begin{equation*}
		E_{1}:=\bmat{I_{n}&0_{n}&\!\cdots\!&0_{n}}^T,\quad
		E_{d}:=\bmat{0_{n}&\!\cdots\!&0_{n}&I_{n}}^T.
	\end{equation*}
\end{cor}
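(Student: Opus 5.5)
We would prove the statement directly, by reducing the $d$th-order problem to a first-order system and solving it by variation of constants, rather than relying only on the cited Green's function result. Given $\mbf{u}\in D$, stack the state as $\hat{\mbf{u}}:=\bmat{\mbf{u}^T&\partial_s\mbf{u}^T&\cdots&(\partial_s^{d-1}\mbf{u})^T}^T\in W_2^{1,nd}[a,b]$. This vector satisfies $\partial_s\hat{\mbf{u}}=N\hat{\mbf{u}}+E_d\mbf{v}$, where $\mbf{v}:=\partial_s^d\mbf{u}$ and $N$ is the nilpotent block shift. The boundary conditions read $B\hat{\mbf{u}}(a)+C\hat{\mbf{u}}(b)=0$. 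Since $e^{Nz}=\mbs{Q}(z)$ (the Taylor matrix), every solution has the form
\begin{equation*}
\hat{\mbf{u}}(s)=\mbs{Q}(s-a)\hat{\mbf{u}}(a)+\int_a^s\mbs{Q}(s-\theta)E_d\mbf{v}(\theta)\,d\theta .
\end{equation*}
This is just Taylor's theorem with integral remainder applied to each $\partial_s^j\mbf{u}$.

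\emph{Injectivity.} Suppose $\partial_s^d\mbf{u}=0$ with $\mbf{u}\in D$. Then $\hat{\mbf{u}}(s)=\mbs{Q}(s-a)\hat{\mbf{u}}(a)$, and the BCs give $(B+C\mbs{Q}(b-a))\hat{\mbf{u}}(a)=K\hat{\mbf{u}}(a)=0$. Invertibility of $K$ forces $\hat{\mbf{u}}(a)=0$, hence $\mbf{u}=0$.

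\emph{Surjectivity and formula.} Given $\mbf{v}\in L_2^n$, substitute the representation into the BCs:
\begin{equation*}
K\hat{\mbf{u}}(a)+C\int_a^b\mbs{Q}(b-\theta)E_d\mbf{v}(\theta)\,d\theta=0,
\end{equation*}
so we must set $\hat{\mbf{u}}(a)=-K^{-1}C\int_a^b\mbs{Q}(b-\theta)E_d\mbf{v}\,d\theta$. Inserting this back and splitting the integral at $\theta=s$ gives exactly $\hat{\mbf{u}}(s)=\int_a^b\hat{\mbs{G}}(s,\theta)E_d\mbf{v}(\theta)\,d\theta$, with $\hat{\mbs{G}}$ as stated. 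Taking the first block component, $\mbf{u}=E_1^T\hat{\mbf{u}}$, yields $\mbs{G}=E_1^T\hat{\mbs{G}}E_d$.

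\emph{Regularity.} It remains to check that this $\mbf{u}$ lies in $W_2^{d,n}$, that its higher derivatives are consistent, and that $\partial_s^d\mbf{u}=\mbf{v}$. Consider $\mbf{w}(s):=\int_a^s\mbs{Q}(s-\theta)E_d\mbf{v}(\theta)\,d\theta$. It is absolutely continuous and satisfies $\partial_s\mbf{w}=N\mbf{w}+E_d\mbf{v}$, since $\mbs{Q}(0)=I$ and $\partial_z\mbs{Q}(z)=N\mbs{Q}(z)$. The same ODE holds for $\hat{\mbf{u}}$, because the polynomial term $\mbs{Q}(s-a)\hat{\mbf{u}}(a)$ solves the homogeneous equation. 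Reading this ODE row by row shows that the $j$th block of $\hat{\mbf{u}}$ is $\partial_s^j$ of the first block, and that $\partial_s^d\mbf{u}=\mbf{v}\in L_2$. So $\mbf{u}\in W_2^{d,n}$, $\mbf{u}\in D$ by construction, and $\mcl{T}\mbf{v}=\mbf{u}$.

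The main obstacle is this regularity and consistency step: justifying that the stacked vector really consists of the derivatives of $\mbf{u}$ when $\mbf{v}$ is merely $L_2$. We would handle it by working with absolutely continuous functions and differentiating under the integral using Lebesgue's differentiation theorem. Everything else is linear algebra.
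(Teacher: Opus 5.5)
Your proof is correct and follows essentially the route the paper relies on. The paper gives no argument beyond citing \cite[Sec.~XIV.3, Thm.~3.1]{gohberg1990LinearOperators}, which is the Green's-function theorem for the first-order two-point problem $\partial_s\hat{\mbf{u}}=N\hat{\mbf{u}}+E_d\mbf{v}$, $B\hat{\mbf{u}}(a)+C\hat{\mbf{u}}(b)=0$; with fundamental matrix $e^{N(s-a)}=\mbs{Q}(s-a)$, its solvability condition reduces to invertibility of $K$ and its kernel reduces to your formula for $\hat{\mbs{G}}$. The only difference is that you carry out the variation-of-constants derivation and the regularity check yourself (that the stacked blocks are the weak derivatives of $\mbf{u}$ and that $\partial_s^d\mbf{u}=\mbf{v}$), so the corollary becomes self-contained rather than deferred to the reference.
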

\vspace*{2mm}

Cor.~\ref{cor:Tmap} shows that if a PDE domain $D\subseteq W_{2}^{d,n}$ is PIE-compatible as in Defn.~\ref{defn:PDE_dom}, then for any $\mbf{v}\in L_{2}^{n}$, there exists a unique solution, $\mbf{u}$, to the boundary value problem
\begin{equation}\label{eq:BVP}
	\partial_{s}^{d}\mbf{u}=\mbf{v},\qquad \mbf{u}\in D.
\end{equation}
Moreover, this solution can be explicitly constructed as $\mbf{u}=\mcl{T}\mbf{v}$, where $\mcl{T}$ is a bounded, linear operator on $L_{2}^{n}$. In this manner, for a linear 1D PDE, PIE compatibility of the PDE domain already establishes well-posedness of the underlying boundary value problem in~\eqref{eq:BVP}. This will greatly simplify well-posedness analysis of the full PDE, by using the operator $\mcl{T}$ to establish a state-space representation of the PDE in terms of PIE state $\mbf{v}(t)\in L_{2}^{n}$.

\subsection{The PIE Representation of Linear 1D PDEs}

Given the class of PIE-compatible domains from the previous subsection, we now consider the challenge of verifying well-posedness of linear PDEs on such domains. In particular, consider the following abstract Cauchy problem,
\begin{equation}\label{eq:Cauchy}
	\dot{\mbf{u}}(t)=A\mbf{u}(t),\quad t\geq 0,\qquad
	\mbf{u}(0)=\mbf{x},
\end{equation}
on $D\subseteq L_{2}^{n}[a,b]$, where $\mbf{x}\in D$ is the initial state and where we define a class of linear operators
\begin{equation}\label{eq:Aop}
	(A\mbf{u})(s):=\sum_{k=0}^{d}\mbs{A}_{k}(s)\partial_{s}^{k}\mbf{u}(s),\quad \mbf{u}\in D,
\end{equation}
parameterized by $\mbs{A}_{k}\in \R^{n\times n}[s]$ for $k\in\{0,\hdots,d\}$. Note that this parameterization supports a broad class of linear, 1D PDEs, several examples of which are provided in Section~\ref{sec:Examples}. Given this class of PDEs, we will consider the following definition of well-posedness.

\begin{defn}[Well-Posed PDE]\label{defn:WellPosedPDE}
	For a given subspace $D\subseteq W_{2}^{d,n}[a,b]$ and $A:D\to L_{2}^{n}[a,b]$, we say that the PDE defined by $A$ is \tbf{well-posed} if $A$ is the infinitesimal generator of a $C_{0}$-semigroup on $L_{2}^{n}[a,b]$. 
\end{defn}

Note that well-posedness of a given PDE depends not only on the operator $A$, but also on the domain $D$ and the underlying Hilbert space on which the system is defined. In order to present a unified result, Defn.~\ref{defn:WellPosedPDE} considers only well-posedness on the Hilbert space $L_2^n$. This is significant for PDE models such as the wave equation in Subsection~\ref{subsec:Examples:Wave}---which is not prima facie well-posed on $L_2$.

To verify well-posedness of PDEs as in~\eqref{eq:Cauchy}, recall that if the PDE domain $D$ is PIE-compatible, then any element of $D$ may be uniquely identified with an element of $L_{2}^{n}$ using the bijection $\mcl{T}:=(\partial_{s}^{d})^{-1}:L_{2}^{n}\to D$. Introducing the state transformation $\mbf{u}(t)=\mcl{T}\mbf{v}(t)$, then, the PDE in~\eqref{eq:Cauchy} can be equivalently represented as a PIE, taking the form
\begin{equation}\label{eq:PIE_standard}
	\partial_{t}\mcl{T}\mbf{v}(t)=\mcl{A}\mbf{v}(t),\quad t\geq 0,\qquad \mbf{v}(0)=\mcl{T}^{-1}\mbf{x},
\end{equation}
where $\mcl{A}\in\mcl{L}(L_{2}^{n}[a,b])$ is defined by
\begin{equation}\label{eq:Aop_PIE}
	\mcl{A}:=A\circ\mcl{T}:=\sum_{k=0}^{d}\tnf{M}_{\mbs{A}_{k}}\circ\partial_{s}^{k}\circ\mcl{T},
\end{equation}
with $(\tnf{M}_{\mbs{A}_{k}}\mbf{v})(s):=\mbs{A}_{k}(s)\mbf{v}(s)$. It is not difficult to see that for any solution to the PDE, there exists an associated solution to the PIE, and vice versa, establishing equivalence of the two representations~\cite{shivakumar2024GPDE}. As such, a notion of well-posedness of the PIE can also be defined in terms of the associated PDE as follows.
\begin{defn}[Well-Posed PIE]
	For $\mcl{T}:L_{2}^{n}[a,b]\to D$ and $\mcl{A}\in\mcl{L}(L_{2}^{n}[a,b])$ bounded linear operators, with $\mcl{T}$ bijective, we say that the PIE defined by  $\{\mcl{T},\mcl{A}\}$ is \tbf{well-posed} if $A:=\mcl{A}\circ\mcl{T}^{-1}:D\to L_{2}^{n}$ is the infinitesimal generator of a $C_{0}$-semigroup. We refer to the $C_{0}$-semigroup generated by $\{\mcl{T},\mcl{A}\}$ as that generated by $A$.
\end{defn}

By this definition, well-posedness of a given PIE is equivalent to well-posedness of its associated PDE. The key advantage of the PIE formulation, however, is that both $\mcl{T}$ and $\mcl{A}$ are bounded operators on $L_{2}^n$, and the boundary conditions and regularity constraints are embedded in $\mcl{T}$. This will allow conditions for well-posedness of the PIE to be formulated in terms of operator inequalities and tested using convex optimization, as we show in the following sections.

\section{A Lumer--Phillips Theorem for PIEs}
\label{sec:LumerPhillips_PIE}

To establish well-posedness conditions in the PIE representation, we first develop a PIE-based formulation of the classical Lumer--Phillips generation theorem (see e.g.~\cite[Thm~3.4.2]{vrabie2003semigroups}). This result enables verification of contraction and quasicontraction semigroups, defined as follows.

\begin{defn}[Contraction Semigroup]
	For a Hilbert space $H$, a $C_{0}$-semigroup $(S(t))_{t\geq 0}$ on $H$ is a \tbf{quasicontraction semigroup} with rate $\omega\in\R$ if $\norm{S(t)}_{H}\leq e^{\omega t}$ for all $t\geq 0$. A \tbf{contraction semigroup} on $H$ is a quasicontraction semigroup with rate $\omega=0$.
\end{defn}

Generators of contraction semigroups correspond to PDEs for which the origin is Lyapunov stable, so that the functional $V(\mbf{u}):=\norm{\mbf{u}}_{H}^{2}$ is non-increasing along solutions. This requires the generator $A:D\to H$ to be \textit{dissipative}, i.e., $\ip{\mbf{u}}{A\mbf{u}}_{H}\leq 0$ for all $\mbf{u}\in D$---a condition that is sufficient for well-posedness of nonlinear ODEs. For PDEs, however, an additional condition is required to ensure that $e^{tA}$ is well-defined, as provided by the Lumer--Phillips theorem.

\begin{thm}[Lumer--Phillips Theorem]\label{thm:LP}
	For a given Hilbert space $H$ and subspace $D\subseteq H$, let $A:D\to H$ be a linear operator. Then $A$ is the infinitesimal generator of a contraction semigroup on $H$ if and only if $\ip{\mbf{u}}{A\mbf{u}}_{H}\leq 0$ for all $\mbf{u}\in D$ and the operator $\lambda I-A:D\to H$ is surjective for some (or equivalently, all) $\lambda >0$.
\end{thm}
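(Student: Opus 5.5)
The plan is to reduce the statement to the Hille--Yosida theorem for contraction semigroups: a linear operator $A:D\to H$ generates a contraction semigroup if and only if $A$ is closed and densely defined, $(0,\infty)$ lies in its resolvent set, and $\norm{(\lambda I-A)^{-1}}_{H}\leq 1/\lambda$ for all $\lambda>0$. Both directions will be checked against this characterization. One point needs care: the statement of Thm.~\ref{thm:LP} does not assume that $D$ is dense. We therefore have to deduce density from dissipativity and the range condition, using that $H$ is a Hilbert space (hence reflexive).

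\emph{Necessity.} Suppose $A$ generates a contraction semigroup $S(t)=e^{tA}$. For $\mbf{u}\in D$, the Cauchy--Schwarz inequality and $\norm{S(t)}\leq 1$ give
\begin{equation*}
\ip{\mbf{u}}{S(t)\mbf{u}}_{H}-\norm{\mbf{u}}_{H}^{2}\leq 0 .
\end{equation*}
Dividing by $t>0$ and letting $t\to 0^{+}$ yields $\ip{\mbf{u}}{A\mbf{u}}_{H}\leq 0$. For surjectivity, fix $\lambda>0$ and $\mbf{f}\in H$, and take the Laplace transform
\begin{equation*}
R_{\lambda}\mbf{f}:=\int_{0}^{\infty}e^{-\lambda t}S(t)\mbf{f}\,dt .
\end{equation*}
This integral converges because $\norm{S(t)}\leq 1$. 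A standard computation of $\frac{1}{h}[S(h)-I]R_{\lambda}\mbf{f}$ shows that $R_{\lambda}\mbf{f}\in D$ and $(\lambda I-A)R_{\lambda}\mbf{f}=\mbf{f}$.

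\emph{Sufficiency.} Assume $A$ is dissipative and that $\lambda_{0}I-A$ is surjective for some $\lambda_{0}>0$. The proof proceeds in four steps.
\begin{enumerate}
\item Dissipativity gives, for $\mbf{u}\in D$ and $\lambda>0$,
\begin{equation*}
\norm{(\lambda I-A)\mbf{u}}_{H}\,\norm{\mbf{u}}_{H}\geq \ip{\mbf{u}}{(\lambda I-A)\mbf{u}}_{H}\geq\lambda\norm{\mbf{u}}_{H}^{2}.
\end{equation*}
Hence $\lambda I-A$ is injective with $\norm{(\lambda I-A)^{-1}}\leq 1/\lambda$ on its range. In particular $(\lambda_{0}I-A)^{-1}$ is bounded and everywhere defined, so $A$ is closed.
\item Next, we propagate surjectivity to every $\lambda>0$. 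For $|\lambda-\lambda_{0}|<\lambda_{0}$, write
\begin{equation*}
\lambda I-A=\big[I+(\lambda-\lambda_{0})(\lambda_{0}I-A)^{-1}\big](\lambda_{0}I-A).
\end{equation*}
The bracket is invertible by a Neumann series, since $|\lambda-\lambda_{0}|/\lambda_{0}<1$. Iterating this argument covers $(0,2\lambda_{0})$, then $(0,4\lambda_{0})$, and so on, giving all of $(0,\infty)$. This also proves the ``equivalently, all'' clause of the statement.
\item To show $D$ is dense, take $\mbf{y}\perp D$. Write $\mbf{y}=(I-A)\mbf{x}$ with $\mbf{x}\in D$, which is possible by Step~2. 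Then
\begin{equation*}
0=\ip{\mbf{x}}{\mbf{y}}_{H}=\norm{\mbf{x}}_{H}^{2}-\ip{\mbf{x}}{A\mbf{x}}_{H}\geq\norm{\mbf{x}}_{H}^{2},
\end{equation*}
so $\mbf{x}=0$ and hence $\mbf{y}=0$.
\item With $A$ closed, densely defined, and satisfying the resolvent bound for all $\lambda>0$, Hille--Yosida yields that $A$ generates a contraction semigroup.
\end{enumerate}

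I expect the main obstacle to be the density step, since it is not an explicit hypothesis. The orthogonality argument above relies on the Hilbert structure and on applying surjectivity at $\lambda=1$, which is why Step~2 comes before it. If citing Hille--Yosida is deemed insufficient, the fallback is to construct the semigroup directly via the Yosida approximants $A_{\lambda}=\lambda A(\lambda I-A)^{-1}$. These are bounded and dissipative, so $\norm{e^{tA_{\lambda}}}\leq 1$, and one shows that $e^{tA_{\lambda}}\mbf{x}$ is Cauchy as $\lambda\to\infty$, first for $\mbf{x}\in D$ and then, by density, for all $\mbf{x}\in H$.
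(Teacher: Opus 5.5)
Your proof is correct. The paper does not prove Thm.~\ref{thm:LP}; it quotes the classical Lumer--Phillips theorem and points to the literature. So there is no in-paper argument to compare against, and your reduction to Hille--Yosida is the standard textbook route.

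Two parts of your write-up go beyond a bare citation and are worth keeping.
\begin{itemize}
\item The usual Banach-space version assumes $D$ is dense, but the paper's formulation omits this hypothesis. Your Step~3 is exactly what justifies dropping it: it combines surjectivity of $I-A$, obtained in Step~2, with the orthogonality argument in $H$.
\item Your Neumann-series propagation in Step~2, which relies on the resolvent bound $1/\lambda$ from dissipativity, is what substantiates the ``some (or equivalently, all) $\lambda>0$'' clause.
\end{itemize}

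Two small remarks.
\begin{itemize}
\item Writing the condition as $\ip{\mbf{u}}{A\mbf{u}}_{H}\leq 0$, rather than $\mathrm{Re}\,\ip{\mbf{u}}{A\mbf{u}}_{H}\leq 0$, is appropriate only because the paper works in real Hilbert spaces. Your necessity argument and Step~3 implicitly use this. In a complex space the same steps go through after taking real parts.
\item The theorem is later applied with $H=L_{2,\mcl{P}}^{n}$. It is therefore relevant that your proof uses only the abstract Hilbert structure of $H$, and nothing specific to the unweighted $L_{2}$ inner product.
\end{itemize}
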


Although the Lumer--Phillips theorem only characterizes generators of contraction semigroups, it is well known that $A$ generates a contraction semigroup if and only if $A+\omega I$ generates a quasicontraction semigroup with rate $\omega$~\cite{engel2000C0Semigroups}, allowing direct application of the theorem to this broader class of generators. 
However, testing the conditions of the Lumer--Phillips theorem---in particular the surjectivity of $\lambda I-A$ for $\lambda>0$---still requires substantial analysis.

In the following subsections, we reformulate the conditions of the Lumer--Phillips theorem in terms of the associated PIE, defined by $\{\mcl{T},\mcl{A}\}$, using the bijectivity of $\mcl{T}:L_{2}^{n}\to D$ to show that these conditions can be tested more conveniently in the PIE representation. In the subsequent subsection, we then establish a PIE-based Lumer--Phillips theorem, providing necessary and sufficient conditions for $\{\mcl{T},\mcl{A}\}$ to generate a quasicontraction semigroup. These results are stated for a general Hilbert space $H$, and will be applied to $H=L_{2,\mcl{P}}^n$ in the next section, to verify generation of more general, non-quasicontraction  semigroups on $L_{2}^n$.

\subsection{Dissipativity in the PIE Representation}
\label{subsec:LumerPhillips_PIE:dissipative}

Consider first the dissipativity condition, $\ip{\mbf{u}}{A\mbf{u}}_{H}\leq 0$ for all $\mbf{u}\in D$. This condition is equivalent to Lyapunov stability of the origin for the PDE, for which conditions have already been established in the associated PIE representation, and may be formulated as in the following lemma.

\begin{lem}\label{lem:dissipative}
	For a Hilbert space $H$, let $\mcl{T}:H\to D$ and $\mcl{A}:H\to H$ be bounded linear operators, with $\mcl{T}$ bijective. Define $A:=\mcl{A}\circ\mcl{T}^{-1}:D\to H$. Then $\ip{\mbf{u}}{A\mbf{u}}_{H}\leq 0$ for all $\mbf{u}\in D$ if and only if $\ip{\mcl{T}\mbf{v}}{\mcl{A}\mbf{v}}_{H}\leq 0$ for all $\mbf{v}\in H$.
\end{lem}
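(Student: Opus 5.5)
The argument is a direct change of variables through the bijection $\mcl{T}:H\to D$; no analytic estimates are needed. The key observation is that for any $\mbf{v}\in H$, setting $\mbf{u}:=\mcl{T}\mbf{v}\in D$ gives
\begin{equation*}
	A\mbf{u}=\mcl{A}\mcl{T}^{-1}\mcl{T}\mbf{v}=\mcl{A}\mbf{v},
\end{equation*}
and hence $\ip{\mbf{u}}{A\mbf{u}}_{H}=\ip{\mcl{T}\mbf{v}}{\mcl{A}\mbf{v}}_{H}$. Both implications follow from this identity together with surjectivity of $\mcl{T}$ (for one direction) and the fact that $\mcl{T}$ maps into $D$ (for the other).

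\emph{Sufficiency.} Suppose $\ip{\mcl{T}\mbf{v}}{\mcl{A}\mbf{v}}_{H}\leq 0$ for all $\mbf{v}\in H$, and fix an arbitrary $\mbf{u}\in D$. Since $\mcl{T}:H\to D$ is bijective, there is a unique $\mbf{v}:=\mcl{T}^{-1}\mbf{u}\in H$. Then $A\mbf{u}=\mcl{A}\mbf{v}$ by definition of $A$, so
\begin{equation*}
	\ip{\mbf{u}}{A\mbf{u}}_{H}=\ip{\mcl{T}\mbf{v}}{\mcl{A}\mbf{v}}_{H}\leq 0.
\end{equation*}

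\emph{Necessity.} Suppose $\ip{\mbf{u}}{A\mbf{u}}_{H}\leq 0$ for all $\mbf{u}\in D$, and fix an arbitrary $\mbf{v}\in H$. Then $\mbf{u}:=\mcl{T}\mbf{v}$ lies in $D$, and the displayed identity gives $\ip{\mcl{T}\mbf{v}}{\mcl{A}\mbf{v}}_{H}=\ip{\mbf{u}}{A\mbf{u}}_{H}\leq 0$.

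There is no real obstacle here. The only points to state explicitly are that $A$ is well defined on all of $D$, which holds because $\mcl{T}^{-1}:D\to H$ exists by bijectivity, and that both inner products are taken in the same space $H$. Boundedness of $\mcl{T}$ and $\mcl{A}$ is not used in this argument; it matters only later, for expressing the condition as the operator inequality $\mcl{T}^*\mcl{A}+\mcl{A}^*\mcl{T}\preceq 0$. That inequality follows from $2\ip{\mcl{T}\mbf{v}}{\mcl{A}\mbf{v}}_{H}=\ip{\mbf{v}}{(\mcl{T}^*\mcl{A}+\mcl{A}^*\mcl{T})\mbf{v}}_{H}$, which holds in a real Hilbert space.
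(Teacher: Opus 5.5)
Your proof is correct and matches the paper's argument: both directions come from the identity $\ip{\mcl{T}\mbf{v}}{\mcl{A}\mbf{v}}_{H}=\ip{\mbf{u}}{A\mbf{u}}_{H}$ for $\mbf{u}=\mcl{T}\mbf{v}$. As in the paper, you take $\mbf{v}:=\mcl{T}^{-1}\mbf{u}$ for sufficiency and $\mbf{u}:=\mcl{T}\mbf{v}\in D$ for necessity. Your side remarks are also accurate: boundedness of $\mcl{T}$ and $\mcl{A}$ is not needed here, and the symmetrized operator form holds in a real Hilbert space.
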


\begin{proof}
	For sufficiency, suppose $\ip{\mcl{T}\mbf{v}}{\mcl{A}\mbf{v}}_{H}\leq 0$ for all $\mbf{v}\in H$. Then, for any $\mbf{u}\in D$, letting $\mbf{v}:=\mcl{T}^{-1}\mbf{u}\in H$,
	\begin{equation*}
		\ip{\mbf{u}}{A\mbf{u}}_{H}
		=\ip{\mcl{T}\mbf{v}}{(\mcl{A}\circ\mcl{T}^{-1})\mcl{T}\mbf{v}}_{H}
		=\ip{\mcl{T}\mbf{v}}{\mcl{A}\mbf{v}}_{H}\leq 0 .
	\end{equation*}
	Similarly, for necessity, suppose $\ip{\mbf{u}}{A\mbf{u}}_{H}\leq 0$ for all $\mbf{u}\in D$. Then, for any $\mbf{v}\in H$, letting $\mbf{u}:=\mcl{T}\mbf{v}\in D$,
	\begin{equation*}
		\!\!\ip{\mcl{T}\mbf{v}}{\mcl{A}\mbf{v}}_{H}
		=\ip{\mbf{u}}{(A\circ\mcl{T})\mcl{T}^{-1}\mbf{u}}_{H}
		=\ip{\mbf{u}}{A\mbf{u}}_{H}\leq 0.
	\end{equation*}%
	\ \\[-1.65\baselineskip]
\end{proof}

By Lem.~\ref{lem:dissipative}, dissipativity of $A=\mcl{A}\circ\mcl{T}^{-1}$ can be verified by testing $\ip{\mcl{T}\mbf{v}}{\mcl{A}\mbf{v}}_{H}\leq0$ for all $\mbf{v}\in H$. Here, while $A$ is defined only on $D$, the operators $\mcl{T}$ and $\mcl{A}$ act on all of $H$. 
This allows dissipativity on the $L_{2}$ space $H=L_{2,\mcl{P}}^n$ with inner product weighted by $\mcl{P}$ to be tested as an operator inequality on $\mcl{T}$, $\mcl{A}$, and $\mcl{P}$, as shown in Section~\ref{sec:LPI}.


\subsection{Surjectivity in the PIE Representation}

Consider now the second condition of the Lumer--Phillips theorem: surjectivity of $\lambda I-A:D\to H$ for some $\lambda>0$. 
Although verifying surjectivity of $\lambda I-A$ for general differential operators $A: D\to H$ is nontrivial, Cor.~\ref{cor:Tmap} shows that for $H=L_{2}^{n}$ and $D\subseteq W_{2}^{d,n}$ parameterized as in Defn.~\ref{defn:PDE_dom}, surjectivity of $\partial_{s}^{d}:D\to H$ is implied by PIE compatibility of $D$, with an explicit inverse given by $\mcl{T}$. Using this fact, surjectivity of $\lambda I-A=(\lambda\mcl{T}-\mcl{A})\mcl{T}^{-1}:D\to H$ is then equivalent to surjectivity of $(\lambda\mcl{T}-\mcl{A}):H\to H$, for any $\lambda>0$. Unlike $\lambda I-A$, however, $\lambda\mcl{T}-\mcl{A}$ is a bounded operator, allowing surjectivity to be analyzed using the open mapping theorem. In particular, the following lemma (a reformulation of~\cite[Thm~4.13]{rudin1991functionalAnalysis})
shows that this property is equivalent to $(\lambda\mcl{T}-\mcl{A})^*$ being bounded below.

\begin{lem}\label{lem:surjective}
	Let $\mcl{B}\in\mcl{L}(H)$ for a Hilbert space $H$. Then $\mcl{B}$ is surjective if and only if there exists $\epsilon>0$ such that\\[-0.75\baselineskip]
	\begin{equation*}
		\norm{\mcl{B}^*\mbf{x}}_{H}\geq\epsilon\norm{\mbf{x}}_{H},
		\qquad \forall \mbf{x}\in H.
	\end{equation*}
\end{lem}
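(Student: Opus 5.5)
The plan is to prove the two directions separately, using the closed range theorem in its Hilbert-space form together with the open mapping theorem.

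\emph{Necessity.} Suppose $\mcl{B}$ is surjective. Since $\mcl{B}$ is bounded and onto the Banach space $H$, the open mapping theorem gives $\delta>0$ such that for every $\mbf{y}\in H$ with $\norm{\mbf{y}}_{H}\leq\delta$ there is $\mbf{z}$ with $\mcl{B}\mbf{z}=\mbf{y}$ and $\norm{\mbf{z}}_{H}\leq 1$. Then for any $\mbf{x}\in H$ we have
\begin{equation*}
\delta\norm{\mbf{x}}_{H}=\sup_{\norm{\mbf{y}}_{H}\leq\delta}\ip{\mbf{x}}{\mbf{y}}_{H}
=\sup_{\norm{\mbf{y}}_{H}\leq\delta}\ip{\mcl{B}^*\mbf{x}}{\mbf{z}_{\mbf{y}}}_{H}\leq\norm{\mcl{B}^*\mbf{x}}_{H}.
\end{equation*}
So we may take $\epsilon=\delta$.

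\emph{Sufficiency.} Suppose $\norm{\mcl{B}^*\mbf{x}}_{H}\geq\epsilon\norm{\mbf{x}}_{H}$ for all $\mbf{x}$. The argument has two steps.
\begin{enumerate}
\item \emph{The range of $\mcl{B}$ is dense.} If $\mbf{x}\perp\tnf{ran}(\mcl{B})$, then $\ip{\mcl{B}^*\mbf{x}}{\mbf{z}}_{H}=0$ for all $\mbf{z}$. Hence $\mcl{B}^*\mbf{x}=0$, and the bound forces $\mbf{x}=0$.
\item \emph{The range of $\mcl{B}$ is closed.} This is the main step, and I would prove it directly by a variational construction rather than cite the closed range theorem. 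For $\mbf{y}\in H$, consider the operator $\mcl{B}\mcl{B}^*\in\mcl{L}(H)$. It is self-adjoint and satisfies
\begin{equation*}
\ip{\mbf{x}}{\mcl{B}\mcl{B}^*\mbf{x}}_{H}=\norm{\mcl{B}^*\mbf{x}}_{H}^{2}\geq\epsilon^{2}\norm{\mbf{x}}_{H}^{2},
\end{equation*}
so it is coercive. By the Lax--Milgram theorem (or the Riesz representation theorem applied to the inner product $\ip{\mcl{B}^*\cdot}{\mcl{B}^*\cdot}_{H}$, which is equivalent to $\ip{\cdot}{\cdot}_{H}$), $\mcl{B}\mcl{B}^*$ is invertible. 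So there is $\mbf{x}$ with $\mcl{B}\mcl{B}^*\mbf{x}=\mbf{y}$, and hence $\mbf{y}=\mcl{B}(\mcl{B}^*\mbf{x})\in\tnf{ran}(\mcl{B})$.
\end{enumerate}
This gives surjectivity directly, so step 1 is not strictly needed and serves only as intuition.

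The only subtle point is justifying invertibility of the coercive operator $\mcl{B}\mcl{B}^*$. The lower bound makes it injective with closed range, since $\norm{\mcl{B}\mcl{B}^*\mbf{x}}_{H}\geq\epsilon^{2}\norm{\mbf{x}}_{H}$. Self-adjointness then gives $\tnf{ran}(\mcl{B}\mcl{B}^*)^{\perp}=\tnf{ker}(\mcl{B}\mcl{B}^*)=\{0\}$, so the range is all of $H$. The rest of the argument is routine, and it matches the statement cited from~\cite[Thm~4.13]{rudin1991functionalAnalysis}.
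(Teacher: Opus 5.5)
Your proof is correct. Your necessity direction is the paper's argument (open mapping theorem plus the duality estimate), but your sufficiency direction takes a genuinely different route.

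The paper first observes $\tnf{ker}(\mcl{B}^*)=\{0\}$. It then proves $\tnf{ran}(\mcl{B}^*)$ is closed by a Cauchy-sequence argument, and needs the closed range theorem to transfer closedness from $\tnf{ran}(\mcl{B}^*)$ to $\tnf{ran}(\mcl{B})$, which gives $\tnf{ran}(\mcl{B})=\tnf{ker}(\mcl{B}^*)^{\perp}=H$.

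You instead work with the Gram operator $\mcl{B}\mcl{B}^*\succeq\epsilon^{2}I$. Because it is self-adjoint, closedness and density of its range concern one and the same operator. So you only need two elementary facts: a bounded-below operator has closed range, and $\tnf{ran}(\mcl{S})^{\perp}=\tnf{ker}(\mcl{S}^*)$ (alternatively, Riesz/Lax--Milgram). The inclusion $\tnf{ran}(\mcl{B}\mcl{B}^*)\subseteq\tnf{ran}(\mcl{B})$ then finishes the job without the closed range theorem.

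Your route also yields more than surjectivity. It gives an explicit bounded right inverse $\mcl{B}^*(\mcl{B}\mcl{B}^*)^{-1}$ of norm at most $1/\epsilon$: for $\mbf{x}=(\mcl{B}\mcl{B}^*)^{-1}\mbf{y}$ one has $\norm{\mcl{B}^*\mbf{x}}_{H}^{2}=\ip{\mbf{x}}{\mbf{y}}_{H}\leq\epsilon^{-1}\norm{\mcl{B}^*\mbf{x}}_{H}\norm{\mbf{y}}_{H}$. It also uses the hypothesis in exactly the form $\mcl{B}\mcl{B}^*\succeq\epsilon^{2}I$, with $\mcl{B}=\lambda\mcl{T}-\mcl{A}$, in which the paper's LPI imposes it in Cor.~\ref{cor:LPI_wellposed}.

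The paper's route, on the other hand, only ever touches $\mcl{B}^*$ and the closed range theorem. It therefore carries over to bounded operators between Banach spaces (with Banach adjoints and annihilators). Forming $\mcl{B}\mcl{B}^*$ relies on identifying $H$ with its dual, so your argument is specific to Hilbert spaces.
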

\vspace*{1mm}
\begin{proof}
	For sufficiency, suppose there exists $\epsilon>0$ such that $\norm{\mcl{B}^*\mbf{x}}_{H}\geq\epsilon\norm{\mbf{x}}_{H}$ for all $\mbf{x}\in H$. Then $\norm{\mcl{B}^*\mbf{x}}_{H}=0$ implies $\mbf{x}=0$, whence $\tnf{ker}(\mcl{B}^*)=\{0\}$. By the closed range theorem, in order to prove $\tnf{ran}(\mcl{B})=\tnf{ker}(\mcl{B}^*)^{\perp}=H$, it suffices to show that $\tnf{ran}(\mcl{B}^*)$ is closed.
	
	To show that $\tnf{ran}(\mcl{B}^*)$ is closed, let $(\mbf{y}_n)_{n\in\N}$ be a convergent sequence in $\tnf{ran}(\mcl{B}^*)$ with limit $\mbf{y}$. Then there exists $(\mbf{x}_n)_{n\in\N}$ such that $\mbf{y}_n=\mcl{B}^*\mbf{x}_n$. Now $(\mbf{x}_n)_{n\in\N}$ is Cauchy, since $(\mcl{B}^*\mbf{x}_{n})_{n\in\N}$ is convergent (and therefore Cauchy) and
	\begin{equation*}
		\norm{\mbf{x}_n-\mbf{x}_m}_{H}\leq
		\frac{1}{\epsilon}\norm{\mcl{B}^*\mbf{x}_n-\mcl{B}^*\mbf{x}_m}_{H},\quad \forall m,n\in\N.
	\end{equation*}
	Since $H$ is a Hilbert space, it follows that $(\mbf{x}_n)_{n\in\N}$ converges to some $\mbf{x}\in H$. By continuity of $\mcl{B}^*$ this implies $\mcl{B}^*\mbf{x}=\mbf{y}$, whence $\tnf{ran}(\mcl{B}^*)$ is closed and thus $\tnf{ran}(\mcl{B})=H$.
	
	For necessity, suppose $\tnf{ran}(\mcl{B})=H$. By the open mapping theorem, $\mcl{B}$ is open. Let $U$ denote the open unit ball in $H$. Then there exists $\epsilon>0$ such that $\epsilon U\subseteq\mcl{B}(U)$. It follows that, for all $\mbf{x}\in H$,
	\begin{align*}
		&\norm{\mcl{B}^{*}\mbf{x}}_{H}
		=\sup_{\mbf{y}\in U}|\ip{\mcl{B}^*\mbf{x}}{\mbf{y}}_{H}| \\
		&\qquad=\sup_{\mbf{y}\in U}|\ip{\mbf{x}}{\mcl{B}\mbf{y}}_{H}|
		\geq \sup_{\mbf{z}\in \epsilon U}|\ip{\mbf{x}}{\mbf{z}}_{H}|
		=\epsilon\norm{\mbf{x}}_{H}.
	\end{align*}
	\ \\[-1.65\baselineskip]
\end{proof}

By Lemma~\ref{lem:surjective}, surjectivity of $\lambda\mcl{T}-\mcl{A}\in\mcl{L}(H)$ can be tested by verifying that $\lambda\mcl{T}^*-\mcl{A}^*$ is bounded below. As for the dissipativity constraint, this condition can be posed as an operator inequality and tested using semidefinite programming, as shown in Section~\ref{sec:LPI}.

\subsection{A PIE-Based Generation Theorem for Quasicontraction Semigroups}

Having shown how both conditions of the Lumer--Phillips theorem can be expressed using the PIE representation, $\{\mcl T,\mcl A\}$, we now combine these results to obtain a PIE-based formulation of the Lumer--Phillips theorem. Here, we remark that if $A:D\to H$ generates a $C_{0}$-semigroup on $H$, then for any $\omega\in\R$, $B:=A+\omega I:D\to H$ also generates a $C_{0}$-semigroup, satisfying $e^{tB}=e^{\omega t}e^{tA}$ for all $t\geq 0$~\cite{engel2000C0Semigroups}. Using this observation, and applying Lem.~\ref{lem:dissipative} and Lem.~\ref{lem:surjective}, we obtain the following necessary and sufficient conditions for $\{\mcl{T},\mcl{A}\}$ to generate a quasicontraction semigroup.

\begin{thm}\label{thm:LP_PIE_quasicontraction}
	For a given Hilbert space $H$, let $\mcl{T}:H\to D$ and $\mcl{A}\in\mcl{L}(H)$ be bounded linear operators, with $\mcl{T}$ bijective. Define $A:=\mcl{A}\circ\mcl{T}^{-1}:D\to H$. Then for any $\omega\in\R$, $A$ is the infinitesimal generator of a quasicontraction semigroup with rate $\omega$ on $H$  if and only if the following are satisfied:
	\begin{enumerate}
		\item $\ip{\mcl{T}\mbf{v}}{\mcl{A}\mbf{v}}_{H}\leq \omega\ip{\mcl{T}\mbf{v}}{\mcl{T}\mbf{v}}_{H}$ for all $\mbf{v}\in H$;
		\item there exist $\epsilon>0$ and $\lambda>\omega$ such that $\norm{(\lambda\mcl{T}^*-\mcl{A}^*)\mbf{v}}_{H}\geq\epsilon\norm{\mbf{v}}_{H}$ for all $\mbf{v}\in H$.
	\end{enumerate}
\end{thm}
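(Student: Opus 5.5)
The plan is to reduce the statement to the classical Lumer--Phillips theorem (Thm.~\ref{thm:LP}), applied to the shifted operator $B:=A-\omega I:D\to H$. By the shifting property recalled before the statement, $A$ generates a $C_{0}$-semigroup if and only if $B$ does, and then $e^{tA}=e^{\omega t}e^{tB}$. Consequently $\norm{e^{tA}}_{H}\leq e^{\omega t}$ for all $t\geq 0$ holds if and only if $\norm{e^{tB}}_{H}\leq 1$. So $A$ generates a quasicontraction semigroup with rate $\omega$ if and only if $B$ generates a contraction semigroup. By Thm.~\ref{thm:LP}, this in turn holds if and only if (a) $\ip{\mbf{u}}{B\mbf{u}}_{H}\leq 0$ for all $\mbf{u}\in D$, and (b) $\mu I-B:D\to H$ is surjective for some (equivalently all) $\mu>0$. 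It remains to show that (a) is equivalent to condition 1 and (b) to condition 2.

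For (a), observe that $B=(\mcl{A}-\omega\mcl{T})\circ\mcl{T}^{-1}$, where $\mcl{A}-\omega\mcl{T}$ is bounded on $H$; here $\mcl{T}$ is regarded as an element of $\mcl{L}(H)$ with range $D$. Lem.~\ref{lem:dissipative}, applied with $\mcl{A}$ replaced by $\mcl{A}-\omega\mcl{T}$, shows that (a) holds if and only if $\ip{\mcl{T}\mbf{v}}{(\mcl{A}-\omega\mcl{T})\mbf{v}}_{H}\leq 0$ for all $\mbf{v}\in H$. This is exactly condition 1.

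For (b), write $\lambda:=\mu+\omega$, so that $\mu>0$ is the same as $\lambda>\omega$. Then
\begin{equation*}
\mu I-B=\lambda I-A=(\lambda\mcl{T}-\mcl{A})\circ\mcl{T}^{-1}.
\end{equation*}
Since $\mcl{T}^{-1}:D\to H$ is a bijection, $\mu I-B$ maps $D$ onto $H$ if and only if $\lambda\mcl{T}-\mcl{A}\in\mcl{L}(H)$ is surjective. By Lem.~\ref{lem:surjective}, this holds if and only if there is $\epsilon>0$ with $\norm{(\lambda\mcl{T}-\mcl{A})^*\mbf{v}}_{H}\geq\epsilon\norm{\mbf{v}}_{H}$ for all $\mbf{v}$. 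Using $(\lambda\mcl{T}-\mcl{A})^*=\lambda\mcl{T}^*-\mcl{A}^*$ (real Hilbert space), this is condition 2. The ``for some'' form of Thm.~\ref{thm:LP} gives sufficiency from a single $\lambda>\omega$. For necessity, any $\lambda>\omega$ works, since Thm.~\ref{thm:LP} yields surjectivity for all $\mu>0$.

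The main obstacle is a technical one. Thm.~\ref{thm:LP} and the shift argument require $A$, and hence $B$, to be a well-defined linear operator on $D$, and the semigroup generated by $B$ to be exactly the shifted one. This follows from the cited shifting result, since $B$ and $A$ share the domain $D$ and the generator of $e^{-\omega t}e^{tA}$ is $A-\omega I$ on the same domain. One further point needs care: using $\mcl{T}^*$ requires treating $\mcl{T}$ as a bounded operator $H\to H$ whose range equals $D$. I would note this explicitly, so that the adjoint is the Hilbert-space adjoint in $\mcl{L}(H)$. With these points settled, the rest of the argument is the direct chaining of equivalences described above.
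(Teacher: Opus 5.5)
Your proposal is correct and follows essentially the same route as the paper. Both proofs shift to $A-\omega I$ (the paper's $A_\omega$, with $\mcl{A}_\omega=\mcl{A}-\omega\mcl{T}$), apply Lemma~\ref{lem:dissipative} to obtain Condition~1, and use $\lambda I-A=(\lambda\mcl{T}-\mcl{A})\circ\mcl{T}^{-1}$ with Lemma~\ref{lem:surjective} and the classical Lumer--Phillips theorem to obtain Condition~2, handling ``some'' versus ``all'' $\lambda>\omega$ in the same way for sufficiency and necessity.
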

\begin{proof}
	Define $A_{\omega}:=A-\omega I:D\to H$ and $\mcl{A}_{\omega}:=A_{\omega}\circ\mcl{T}=\mcl{A}-\omega\mcl{T}\in\mcl{L}(H)$. Then for all $\mbf{v}\in H$,
	\begin{equation}\label{eq:dissipative1}
		\ip{\mcl{T}\mbf{v}}{\mcl{A}_{\omega}\mbf{v}}_{H}
		=\ip{\mcl{T}\mbf{v}}{\mcl{A}\mbf{v}}_{H}-\omega\ip{\mcl{T}\mbf{v}}{\mcl{T}\mbf{v}}_{H},	\tag{$*$}
	\end{equation}
	and, for any $\lambda>\omega$ and $\lambda_{\omega}:=\lambda-\omega>0$,
	\begin{equation}\label{eq:surjective1}
		\norm{(\lambda_{\omega}\mcl{T}^*-\mcl{A}_{\omega}^*)\mbf{v}}_{H}
		=\norm{(\lambda\mcl{T}-\mcl{A})^*\mbf{v}}_{H}.		\tag{$\star$}
	\end{equation}
	Now, for sufficiency, suppose both conditions hold. Then, by~\eqref{eq:dissipative1} and Lem.~\ref{lem:dissipative}, it follows that $\ip{\mbf{u}}{A_{\omega}\mbf{u}}_{H}\leq 0$ for all $\mbf{u}\in D$. In addition, by~\eqref{eq:surjective1} and Lem.~\ref{lem:surjective}, $(\lambda_{\omega}\mcl{T}-\mcl{A}_{\omega})\in\mcl{L}(H)$ is surjective for $\lambda_{\omega}>0$. Since $\mcl{T}:H\to D$ is bijective, this implies $(\lambda_{\omega} I-A_{\omega}):=(\lambda_{\omega}\mcl{T}-\mcl{A}_{\omega})\circ\mcl{T}^{-1}:D\to H$ is surjective. By the Lumer--Phillips theorem, it follows that $A_{\omega}$ generates a contraction semigroup on $H$, whence $A$ generates a quasicontraction semigroup with rate $\omega$ on $H$.
	
	Next, for necessity, suppose $A$ generates a quasicontraction semigroup with rate $\omega$, so that $A_{\omega}$ generates a contraction semigroup. By the Lumer--Phillips theorem, this implies $\ip{\mbf{u}}{A_{\omega}\mbf{u}}_{H}\leq 0$ for all $\mbf{u}\in D$, and $\lambda_{\omega} I-A_{\omega}:D\to H$ is surjective for all $\lambda_{\omega}>0$. By Lem.~\ref{lem:dissipative} and~\eqref{eq:dissipative1} it follows that Condition~1) holds. In addition, since $\mcl{T}:H\to D$ is bijective, $\lambda_{\omega} \mcl{T}-\mcl{A}_{\omega}:H\to H$ is surjective for all $\lambda_{\omega}>0$. By Lem.~\ref{lem:surjective} and~\eqref{eq:surjective1}, this implies Condition~2). 
\end{proof}

Theorem~\ref{thm:LP_PIE_quasicontraction} provides necessary and sufficient conditions for $\{\mcl{T},\mcl{A}\}$ to generate a quasicontraction semigroup on $H$, with rate $\omega$. In the following section, we show how these conditions can be formulated as linear operator inequalities, to verify generation of an arbitrary $C_{0}$-semigroup on $L_{2}^{n}[a,b]$.

\section{A PIE-Based Well-Posedness Test for PDEs}\label{sec:LPI}

Using Thm.~\ref{thm:LP_PIE_quasicontraction}, we now derive sufficient conditions under which a PIE-compatible PDE generates a $C_{0}$-semigroup on $L_{2}^{n}[a,b]$, satisfying an exponential bound as defined below.

\begin{defn}\label{defn:rate}
	For a Hilbert space $H$, a $C_{0}$-semigroup $(S(t))_{t\ge0}$ on $H$ has \tbf{growth rate} $\omega\in\R$ and \tbf{gain} $M\ge1$ if\\[-0.75\baselineskip]
	\begin{equation*}
		\norm{S(t)}_{\mcl{L}(H)}\leq Me^{\omega t},\qquad \forall t\geq0 .
	\end{equation*}
\end{defn}
\vspace*{2mm}

It is well-known that any $C_{0}$-semigroup satisfies a bound of this form for some $\omega\in\R$ and $M\geq 1$~\cite{curtain1995introduction}. However, while Thm.~\ref{thm:LP_PIE_quasicontraction} allows arbitrary growth rates $\omega$, it restricts the resulting semigroup to have gain $M=1$. In the following subsection, therefore, we establish conditions for generation of a $C_{0}$-semigroup with arbitrary gain $M$ on $L_{2}^{n}$, by testing generation of a quasicontraction semigroup in the weighted inner product space, $H=L_{2,\mcl{P}}^{n}$. We then show how these conditions can be verified via convex optimization by parameterizing $\mcl{P}$ as a partial integral operator.

\subsection{Generation of $C_{0}$-Semigroups on $L_{2}$}


Thm.~\ref{thm:LP_PIE_quasicontraction} with $H=L_{2}^{n}[a,b]$ provides necessary and sufficient conditions for $\{\mcl{T},\mcl{A}\}$ to generate a quasicontraction semigroup on $L_{2}^{n}$. Now, to verify generation of more general $C_{0}$-semigroups on $L_{2}^{n}$, we note that for any coercive and bounded operator $\mcl{P}$, a $C_{0}$-semigroup on $L_{2,\mcl P}^{n}$ is also a $C_{0}$-semigroup on $L_{2}^{n}$ by equivalence of the inner products. Therefore, sufficient conditions for well-posedness on $L_{2}^{n}$ may be formulated in terms of generation of a quasicontraction semigroup on $H=L_{2,\mcl P}^{n}$, as in the following result.

\begin{thm}\label{thm:LP_PIE_sufficient}
	For $n\in\N$ and $[a,b]\subseteq\R$, let $\mcl{T}:L_{2}^{n}[a,b]\to D$ and $\mcl{A}\in\mcl{L}(L_{2}^{n}[a,b])$ be bounded linear operators, with $\mcl{T}$ bijective. Define $A:=\mcl{A}\circ\mcl{T}^{-1}:D\to L_{2}^{n}[a,b]$. For any $\omega\in\R$, if the following conditions are satisfied,
	\begin{enumerate}
		\item there exist $\epsilon_{1}>0$, $\mcl{P}\in\mcl{L}(L_{2}^{n})$ such that $\mcl{P}\succeq\epsilon_{1}^{2}I$ and
		\begin{equation*}
			\ip{\mcl{T}\mbf{v}}{\mcl{A}\mbf{v}}_{L_{2,\mcl P}}
			\le\omega\ip{\mcl{T}\mbf{v}}{\mcl{T}\mbf{v}}_{L_{2,\mcl P}}, \quad \forall \mbf{v}\in L_{2}^{n};
		\end{equation*}
		
		\item there exist $\epsilon_{2}>0$ and $\lambda>\omega$ such that
		\begin{equation*}
			\norm{(\lambda\mcl{T}^*-\mcl{A}^*)\mbf{v}}_{L_{2}}\ge \epsilon_{2}\norm{\mbf{v}}_{L_{2}},
			\qquad \forall \mbf{v}\in L_{2}^{n};
		\end{equation*}
	\end{enumerate}
	then $A$ generates a $C_{0}$-semigroup on $L_{2}^n$ with growth rate $\omega$ and gain $M:=\frac{\sqrt{\norm{\mcl{P}}_{\mcl{L}(L_{2})}}}{\epsilon_{1}}$.
\end{thm}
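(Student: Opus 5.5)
The plan is to apply Thm.~\ref{thm:LP_PIE_quasicontraction} on the weighted space $H=L_{2,\mcl P}^{n}[a,b]$, and then transfer the resulting quasicontraction bound back to $L_{2}^{n}$ using the equivalence of norms. First I will check that $L_{2,\mcl P}^{n}$ is a Hilbert space with the same underlying set, and that it carries the same topology as $L_{2}^{n}$. This needs $\mcl{P}$ to be self-adjoint (implicit in $\mcl{P}\succeq\epsilon_1^2 I$), and then
\begin{equation*}
\epsilon_{1}^{2}\norm{\mbf{x}}_{L_2}^{2}\le\norm{\mbf{x}}_{L_{2,\mcl P}}^{2}\le\norm{\mcl{P}}\norm{\mbf{x}}_{L_2}^{2}.
\end{equation*}
It follows that $\mcl{T}$ and $\mcl{A}$ remain bounded on $H$, that $\mcl{T}$ is still bijective onto $D$, and that $A=\mcl{A}\circ\mcl{T}^{-1}$ is the same operator. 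Condition~1) of the present theorem is then literally Condition~1) of Thm.~\ref{thm:LP_PIE_quasicontraction} on $H$.

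The main step is to translate Condition~2), which is stated with $L_2$ adjoints and norms, into surjectivity of $\lambda\mcl{T}-\mcl{A}$. I will avoid computing $H$-adjoints. Instead, Lem.~\ref{lem:surjective} applied on $L_{2}^{n}$ gives that $\lambda\mcl{T}-\mcl{A}:L_2^n\to L_2^n$ is surjective. Surjectivity is purely set-theoretic, so it also holds as a map $H\to H$. Hence $\lambda I-A=(\lambda\mcl{T}-\mcl{A})\circ\mcl{T}^{-1}:D\to H$ is surjective. If I want to invoke Thm.~\ref{thm:LP_PIE_quasicontraction} verbatim, I would apply the necessity direction of Lem.~\ref{lem:surjective} on $H$ to recover a lower bound for the $H$-adjoint. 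Alternatively, and more cleanly, I can repeat the sufficiency argument of that theorem directly. Set $A_\omega=A-\omega I$. Condition~1) together with Lem.~\ref{lem:dissipative} on $H$ shows that $A_\omega$ is dissipative in $H$. The surjectivity just shown gives that $(\lambda-\omega)I-A_\omega$ is surjective with $\lambda-\omega>0$. Lumer--Phillips (Thm.~\ref{thm:LP}) then shows that $A_\omega$ generates a contraction semigroup on $H$. Therefore $A$ generates $S(t)$ on $H$ with $\norm{S(t)}_{\mcl L(H)}\le e^{\omega t}$.

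Finally, I will transfer this bound. The semigroup axioms, including strong continuity, are preserved under equivalent norms. The generator is defined by a limit, and that limit is the same in both topologies, so $A$ with domain $D$ generates $S$ on $L_2^n$ as well. For the gain, I estimate
\begin{equation*}
\norm{S(t)\mbf{x}}_{L_2}\le\tfrac{1}{\epsilon_1}\norm{S(t)\mbf{x}}_{L_{2,\mcl P}}\le\tfrac{e^{\omega t}}{\epsilon_1}\norm{\mbf{x}}_{L_{2,\mcl P}}\le\tfrac{\sqrt{\norm{\mcl P}}}{\epsilon_1}e^{\omega t}\norm{\mbf{x}}_{L_2}.
\end{equation*}
This gives gain $M$, and $M\ge1$ holds automatically. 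The only delicate point I expect is the mismatch between the $L_2$-adjoint in Condition~2) and the $H$-geometry. It is resolved by observing that surjectivity does not depend on the inner product.
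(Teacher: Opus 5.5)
Your proposal is correct and follows essentially the same route as the paper's proof. You get dissipativity of $A_\omega$ in $L_{2,\mcl P}^n$ from Lem.~\ref{lem:dissipative}, surjectivity of $\lambda\mcl{T}-\mcl{A}$ on $L_2^n$ from Lem.~\ref{lem:surjective} (carried over to $L_{2,\mcl P}^n$ because the underlying set is the same), Lumer--Phillips on the weighted space, and the gain from the same norm-equivalence chain; your explicit remarks on the self-adjointness of $\mcl{P}$ and on the generator with domain $D$ being unchanged under equivalent norms spell out points the paper uses only tacitly.
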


\begin{proof}
	Suppose both conditions hold, and define $A_{\omega}:=A-\omega I:D\to L_{2}^{n}$ and $\lambda_{\omega}:=\lambda-\omega>0$. By Condition~1) and Lem.~\ref{lem:dissipative}, we have $	\ip{\mbf{u}}{A_{\omega}\mbf{u}}_{L_{2,\mcl{P}}}\leq 0$ for all $\mbf{u}\in D$.
	Furthermore, by Condition~2) and Lem.~\ref{lem:surjective}, $(\lambda\mcl{T}-\mcl{A})\in\mcl{L}(L_{2}^{n})$ is surjective, and thus $(\lambda I-A):D\to L_{2}^{n}$ is surjective. Since $L_{2}^{n}$ and $L_{2,\mcl{P}}^{n}$ have the same underlying vector space, it follows that $\lambda_{\omega}I-A_{\omega}
	=\lambda I-A:D\to L_{2,\mcl{P}}^{n}$
	is surjective. By the Lumer--Phillips theorem, then, $A_{\omega}$ generates a contraction semigroup on $L_{2,\mcl{P}}^{n}$, and hence $A$ generates a quasicontraction semigroup on $L_{2,\mcl{P}}^{n}$ with growth rate $\omega$. Since the norms on $L_{2}^{n}$ and $L_{2,\mcl{P}}^{n}$ are equivalent, this semigroup is also a $C_{0}$-semigroup on $L_{2}^{n}$.
	Finally, since
	\begin{equation*}
		\epsilon_{1}^2\norm{\mbf{v}}_{L_{2}}^2
		\leq\norm{\mbf{v}}_{L_{2,\mcl P}}^2
		\leq\norm{\mcl{P}}_{\mcl{L}(L_{2})}\norm{\mbf{v}}_{L_{2}}^2,
		\qquad \forall\,\mbf{v}\in L_{2}^{n},
	\end{equation*}
	it follows that for all $\mbf{x}\in L_{2}^{n}$,
	\begin{equation*}
		\norm{e^{tA}\mbf{x}}_{L_{2}}
		\!\leq\!\frac{1}{\epsilon_{1}}\!\norm{e^{tA}\mbf{x}}_{L_{2,\mcl P}}
		\!\leq\!\frac{1}{\epsilon_{1}}e^{\omega t}\norm{\mbf{x}}_{L_{2,\mcl P}}
		\!\leq\! Me^{\omega t}\norm{\mbf{x}}_{L_{2}}.
	\end{equation*}
\end{proof}

Theorem~\ref{thm:LP_PIE_sufficient} provides sufficient conditions for $\{\mcl{T},\mcl{A}\}$ to generate a $C_{0}$-semigroup with gain $M$ and rate $\omega$ on $L_{2}^{n}$. The condition $\ip{\mcl{T}\mbf{v}}{\mcl{A}\mbf{v}}_{\mcl{P}}\leq \omega\ip{\mcl{T}\mbf{v}}{\mcl{T}\mbf{v}}_{\mcl{P}}$ herein is closely related to Lyapunov conditions for exponential stability in the PIE framework, differing only in that well-posedness does not require $\omega\leq 0$. Optimization-based methods for testing this condition have already been proposed for stability analysis in the PIE representation~\cite{shivakumar2024GPDE}. In the following subsection, we show how the well-posedness conditions may be similarly tested using convex optimization methods.

\subsection{An LPI for Well-Posedness of PDEs}\label{subsec:LPI:LPI}

Having established a test for well-posedness of PIEs in Thm.~\ref{thm:LP_PIE_sufficient}, we now show how this result can be applied to PDEs of the form in~\eqref{eq:Cauchy}, on PIE-compatible domains $D$ as in Defn.~\ref{defn:PDE_dom}. For such systems, we can define an associated PIE by $\mcl{T}$ as in Cor.~\ref{cor:Tmap} and $\mcl{A}:=A\circ\mcl{T}$ as in~\eqref{eq:Aop_PIE}. It has been shown that both $\mcl{T}$ and $\mcl{A}$ belong to the class of Partial Integral (PI) operators~\cite{shivakumar2024GPDE}, taking the form
\begin{equation*}
	(\mcl{P}\mbf{v})(s)\!:=\mbs{P}(s)\mbf{v}(s)+\!\int_{a}^{s}\!\!\mbs{L}(s,\theta)\mbf{v}(\theta) d\theta
	+\!\int_{s}^{b}\!\!\mbs{U}(s,\theta)\mbf{v}(\theta) d\theta,
\end{equation*}
and parameterized by polynomials $\mbs{P},\mbs{L},\mbs{U}$. Such PI operators form a class of bounded, linear operators that is closed under adjoint and composition, with explicit formulae for these operations given in~\cite{shivakumar2024GPDE}. In addition, PI operator variables can be parameterized using a finite monomial basis, and positivity and equality constraints can be enforced via semidefinite programming. Using these properties, the following corollary shows how well-posedness in the PIE representation can be verified by solving a convex optimization problem on PI operator variables.

\begin{cor}\label{cor:LPI_wellposed}
	For $n,d\in\N$, $[a,b]\subseteq\R$, and $B,C\in\R^{nd\times nd}$, let the associated domain $D\subseteq W_{2}^{d,n}[a,b]$ as in Defn.~\ref{defn:PDE_dom} be PIE-compatible. For any $\mbs{A}_{k}\in\R^{n\times n}[s]$, define $A:D\to L_{2}^{n}[a,b]$ as in~\eqref{eq:Aop}. Define $\mcl{T}:L_{2}^{n}\to D$ as in Cor.~\ref{cor:Tmap}, and let $\mcl{A}:=A\circ\mcl{T}\in \mcl{L}(L_{2}^{n})$. For any $\epsilon_{1},\epsilon_{2}>0$ and $\omega\in\R$, if there exists a PI operator $\mcl{P}$ and scalar $\lambda>\omega$ satisfying
	\begin{align}\label{eq:LPI_wellposedness}
		\mcl{P}\succeq\epsilon_{1}^2 I_{n},	\qquad
		\mcl{T}^*\mcl{P}\mcl{A}+\mcl{A}^*\mcl{P}\mcl{T}&\preceq 2\omega\mcl{T}^*\mcl{P}\mcl{T},	\\
		(\lambda\mcl{T}-\mcl{A})(\lambda\mcl{T}^*-\mcl{A}^*)&\succeq\epsilon_{2}^2 I_{n},	\notag
	\end{align}
	then $A$ is the infinitesimal generator of a $C_{0}$-semigroup on $L_{2}^{n}[a,b]$ with growth rate $\omega$ and gain $M:=\frac{\sqrt{\norm{\mcl{P}}_{\mcl{L}(L_{2})}}}{\epsilon_{1}}$.
\end{cor}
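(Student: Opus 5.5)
The plan is to reduce the corollary directly to Thm.~\ref{thm:LP_PIE_sufficient} by translating each operator inequality in~\eqref{eq:LPI_wellposedness} into the corresponding quadratic-form condition. First, PIE-compatibility of $D$ together with Cor.~\ref{cor:Tmap} gives that $\mcl{T}:L_{2}^{n}\to D$ is a bounded bijection. Next I will check that $\mcl{A}=A\circ\mcl{T}$ is bounded on $L_{2}^{n}$. Since $\partial_{s}^{k}\mcl{T}$ for $k\le d$ is either the identity ($k=d$) or an integral operator with polynomial kernel, and multiplication by polynomials $\mbs{A}_{k}$ is bounded on $[a,b]$, $\mcl{A}$ is a PI operator and hence lies in $\mcl{L}(L_{2}^{n})$. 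Then $A=\mcl{A}\circ\mcl{T}^{-1}$ on $D$, so the hypotheses of Thm.~\ref{thm:LP_PIE_sufficient} on $\mcl{T},\mcl{A}$ hold, and $\mcl{P}$, being a PI operator, lies in $\mcl{L}(L_{2}^{n})$.

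For Condition~1), I will use $\mcl{P}\succeq\epsilon_{1}^{2}I$ directly. Coercivity also makes $\mcl{P}$ self-adjoint, since the inequality is understood in the sense of symmetric operators, so $\ip{\cdot}{\cdot}_{L_{2,\mcl P}}$ is a genuine inner product. For any $\mbf{v}\in L_{2}^{n}$, pairing the second inequality in~\eqref{eq:LPI_wellposedness} with $\mbf{v}$ gives
\begin{equation*}
2\ip{\mcl{T}\mbf{v}}{\mcl{P}\mcl{A}\mbf{v}}_{L_{2}}=\ip{\mbf{v}}{(\mcl{T}^*\mcl{P}\mcl{A}+\mcl{A}^*\mcl{P}\mcl{T})\mbf{v}}_{L_{2}}\le 2\omega\ip{\mcl{T}\mbf{v}}{\mcl{P}\mcl{T}\mbf{v}}_{L_{2}}.
\end{equation*}
The equality uses the symmetry of $\mcl{P}$ and the fact that everything is real. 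Dividing by two gives exactly Condition~1) with the weighted inner product.

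For Condition~2), pairing the third inequality with $\mbf{v}$ gives
\begin{equation*}
\norm{(\lambda\mcl{T}^*-\mcl{A}^*)\mbf{v}}_{L_{2}}^{2}=\ip{\mbf{v}}{(\lambda\mcl{T}-\mcl{A})(\lambda\mcl{T}^*-\mcl{A}^*)\mbf{v}}_{L_{2}}\ge\epsilon_{2}^{2}\norm{\mbf{v}}_{L_{2}}^{2}.
\end{equation*}
Taking square roots yields Condition~2) with $\lambda>\omega$. Thm.~\ref{thm:LP_PIE_sufficient} then gives generation of a $C_{0}$-semigroup on $L_{2}^{n}$ with growth rate $\omega$ and gain $M=\sqrt{\norm{\mcl{P}}}/\epsilon_{1}$.

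The only step needing care is the first: confirming that $\mcl{A}$ is bounded and that $\mcl{P}$ is self-adjoint. Boundedness of $\mcl{A}$ follows from the explicit form of $\mcl{T}$ in Cor.~\ref{cor:Tmap}. Differentiating $\int_a^b \mbs{G}(s,\theta)\mbf{v}(\theta)d\theta$ up to $d-1$ times yields integral operators with polynomial kernels, and the $d$-th derivative returns $\mbf{v}$. The remaining steps are direct substitutions.
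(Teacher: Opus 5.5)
Your proposal is correct and follows essentially the same route as the paper: you pair each operator inequality in~\eqref{eq:LPI_wellposedness} with $\mbf{v}$ to recover Conditions~1) and~2) of Thm.~\ref{thm:LP_PIE_sufficient} and then invoke that theorem, and your checks on boundedness of $\mcl{A}$ and symmetry of $\mcl{P}$ are details the paper takes for granted. One wording point: on a real Hilbert space coercivity alone does not force $\mcl{P}=\mcl{P}^*$, so symmetry comes from the convention you cite, not from coercivity. The paper's identity $\ip{\mcl{T}\mbf{v}}{\mcl{A}\mbf{v}}_{L_{2,\mcl P}}=\frac{1}{2}\ip{\mbf{v}}{[\mcl{T}^*\mcl{P}\mcl{A}+\mcl{A}^*\mcl{P}\mcl{T}]\mbf{v}}_{L_{2}}$ relies on that convention implicitly too, although replacing $\mcl{P}$ by $\frac{1}{2}(\mcl{P}+\mcl{P}^*)$ would remove the need for it without enlarging the gain.
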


\begin{proof}
	Suppose $\mcl{P},\omega$ satisfy~\eqref{eq:LPI_wellposedness}. Then, for all $\mbf{v}\in L_{2}^{n}$,
	\begin{align*}
		\ip{\mcl{T}\mbf{v}}{\mcl{A}\mbf{v}}_{L_{2,\mcl P}}
		&=\frac{1}{2}\ip{\mbf{v}}{[\mcl{T}^*\mcl{P}\mcl{A}+\mcl{A}^*\mcl{P}\mcl{T}]\mbf{v}}_{L_{2}}	\\
		&\leq \omega\ip{\mcl{T}\mbf{v}}{\mcl{T}\mbf{v}}_{L_{2,\mcl P}},
	\end{align*}
	as well as
	 $\norm{(\lambda\mcl{T}^*-\mcl{A}^*)\mbf{v}}_{L_{2}}^2\geq \epsilon_{2}^2\norm{\mbf{v}}_{L_{2}}^2$.
	By Thm.~\ref{thm:LP_PIE_sufficient}, it follows that $A$ is the infinitesimal generator of a $C_{0}$-semigroup on $L_{2}^{n}[a,b]$ with growth rate $\omega$ and gain $M$.
\end{proof}

By Cor.~\ref{cor:LPI_wellposed}, well-posedness of a PIE-compatible PDE can be tested by solving the optimization program in~\eqref{eq:LPI_wellposedness}. For each fixed $\omega$, this is a feasibility problem that is linear in the decision variable $\mcl{P}$ and defined by PI operator inequalities. We refer to such a program as a Linear PI Inequality (LPI). Using the PIETOOLS software suite~\cite{shivakumar2025PIETOOLS}, this LPI can be converted to a semidefinite program and solved with an SDP solver such as MOSEK~\cite{mosek}. Using bisection, a smallest value of $\omega$ may then be established for which the program is feasible, thereby certifying well-posedness together with a (numerical) least upper bound on the exponential growth rate. In the next section, we apply this optimization program to several PDE examples and obtain tight upper bounds on the exponential growth rate of solutions.

\section{Illustrations of the Well-Posedness Test}\label{sec:Examples}

Given the well-posedness test for linear PDEs in Cor.~\ref{cor:LPI_wellposed}, we now demonstrate its effectiveness on several classical PDE examples. We then illustrate the utility of the test by applying it to a more complicated, speculative PDE for which well-posedness has not been previously studied. For each example, the optimization program in~\eqref{eq:LPI_wellposedness} is implemented in PIETOOLS~\cite{shivakumar2025PIETOOLS}, with $\epsilon_{1}=\epsilon_{2}=10^{-1}$ and $\lambda=\omega+1$, and the resulting semidefinite program is solved using MOSEK~\cite{mosek}. For each of the classical PDE examples, an ill-posed version is also formulated by modifying either the definition of $A$ or of its domain, and we verify that the LPI in~\eqref{eq:LPI_wellposedness} becomes infeasible. Analytic proofs of well-posedness of the classical PDE examples are provided in Appx.~\ref{appx:PDE_examples}.

\subsection{Transport Equation}\label{subsec:Examples:Transport}

\subsubsection{Well-Posed Example} As a first example, consider the transport equation
\begin{equation}\label{eq:transportPDE}
	u_{t}(t,s)=-u_{s}(t,s),\qquad u(t,0)=0.
\end{equation}
We formulate this PDE as in~\eqref{eq:Cauchy}, by defining $A:=-\partial_{s}:D\to L_{2}[0,1]$ for $D:=\bl\{\mbf{u}\in W_{2}^{1}[0,1]\mid \mbf{u}(0)=0\br\}$.
Then $A$ is the infinitesimal generator of a $C_{0}$-semigroup on $L_{2}$, with exponential growth rate $\omega$ and gain $M=e^{-\omega}$ for any $\omega\leq 0$.
We verify this using Cor.~\ref{cor:LPI_wellposed}, defining the associated PIE representation by $(\mcl{T}\mbf{v})(s):=\int_{0}^{s}\mbf{v}(\theta)d\theta$ and $\mcl{A}:=-I$.
Solving the optimization program in~\eqref{eq:LPI_wellposedness} with $\mcl{P}=I$, well-posedness can be verified with $\omega=0$, certifying that $A$ generates a contraction semigroup.
Furthermore, parameterizing a more general PI operator variable $\mcl{P}$ using monomials of degree at most $2d$, an improved upper bound on the exponential growth rate $\omega$ of solutions is established as in Table~\ref{tab:transport}.

\subsubsection{Ill-Posed Example} Consider now the transport equation with Dirichlet condition at the opposite boundary, so that $A:=-\partial_{s}$ and $D:=\bl\{\mbf{u}\in W_{2}^{1}[0,1]\mid \mbf{u}(1)=0\br\}$, which is ill-posed.
Defining the associated PIE representation---setting $(\mcl{T}\mbf{v})(s)=-\int_s^1 \mbf{v}(\theta)d\theta$ and $\mcl{A}:=-I$---the LPI~\eqref{eq:LPI_wellposedness} is infeasible for all $\omega\in\R$.
We can verify this using $\mbf{v}_{n}\in L_{2}[0,1]$ defined by $\mbf{v}_{n}(s):=n$ for $s\in[0,\frac{1}{n}]$ and $\mbf{v}_{n}(s):=0$ else, for $n\in\N$. 
Then
\begin{equation*}
	(\mcl{T}\mbf{v}_{n})(s)
	=\begin{cases}
		ns-1,	& 	s<\frac{1}{n},\\
		0,	&	\tnf{else},
	\end{cases}\qquad n\in\N,
\end{equation*}
so that $\ip{\mcl{T}\mbf{v}_{n}}{\mcl{T}\mbf{v}_{n}}_{L_{2}}=\frac{1}{3n}$ and $\ip{\mcl{T}\mbf{v}_{n}}{\mbf{v}_{n}}=-\frac{1}{2}$. It follows that, for any $\omega\in\R$ and $\mcl{P}\in \mcl{L}(L_{2})$ satisfying $\mcl{P}\succeq \epsilon_{1}^{2}I$, letting $n\to\infty$,
\begin{equation*}
	\frac{\ip{\mcl{T}\mbf{v}_{n}}{\mcl{A}\mbf{v}_{n}}_{L_{2,\mcl{P}}}}{\omega\ip{\mcl{T}\mbf{v}_{n}}{\mcl{T}\mbf{v}_{n}}_{L_{2,\mcl{P}}}}	
	\geq \frac{\epsilon_{1}^2}{\norm{\mcl{P}}_{\mcl{L}(L_{2})}\omega} \frac{-\ip{\mcl{T}\mbf{v}_{n}}{\mbf{v}_{n}}_{L_{2}}}{\ip{\mcl{T}\mbf{v}_{n}}{\mcl{T}\mbf{v}_{n}}_{L_{2}}}\to\infty,
\end{equation*}
whence Condition 1) of Thm.~\ref{thm:LP_PIE_sufficient} cannot be satisfied.

\begin{table}[t]
	\setlength{\tabcolsep}{3.2pt}
	\begin{tabular}{c|ccccccccc}
		$d$ & 0 & 1 & 2 & 3 & 4 & 5 & 6 & 7 & 8\\\hline
		$\omega$ & 0 & -0.999 & -2.366 & -3.874 & -4.834 & -4.958 & -5.067 & -5.079 & -5.100
	\end{tabular}
	\caption{Least upper bound on the exponential growth rate $\omega$ with which well-posedness of the transport equation in~\eqref{eq:transportPDE} can be verified by solving the LPI in Cor.~\ref{cor:LPI_wellposed}, parameterizing $\mcl{P}$ by monomials of degree at most $2d$.}
	\label{tab:transport}
	\vspace*{-0.4cm}
\end{table}

\subsection{Heat Equation}\label{subs:Examples:Heat}

\subsubsection{Well-Posed Example} Next, consider the following reaction–diffusion equation,
\begin{align*}
	u_t(t,s)&=u_{ss}(t,s)+ru(t,s), & t\ge0,\ s\in[0,1],\\
	u(t,0)&=u(t,1)=0,
\end{align*}
where $r\in\R$. Representing this PDE as in~\eqref{eq:Cauchy} with
\begin{equation*}
A:=\partial_s^2+rI,\quad
D=\bl\{\mbf{u}\in W_2^2[0,1]\mid \mbf{u}(0)=\mbf{u}(1)=0\br\},
\end{equation*}
$A$ generates a $C_0$-semigroup with exponential growth rate $\omega=r-\pi^2$ and gain $M=1$ on $L_{2}[0,1]$, for all $r\in\R$. We can represent this system as a PIE by defining
\begin{equation}\label{eq:Top_heatPDE}
	(\mcl{T}\mbf{v})(s)=
	\int_0^s \theta(s-1)\mbf{v}(\theta)d\theta
	+\int_s^1 s(\theta-1)\mbf{v}(\theta)d\theta,
\end{equation}
and $\mcl{A}:=A\circ\mcl{T}=I+r\mcl{T}$. Solving the LPI in~\eqref{eq:LPI_wellposedness}, parameterizing $\mcl{P}$ by polynomials of degree at most $6$, well-posedness can be verified for any $r\in\{0,\dots,20\}$, with least upper bound $\omega=r-\pi^2+10^{-5}$ on the growth rate.

\subsubsection{Ill-Posed Example} Consider now an ill-posed, reverse-time heat equation, defined by
\begin{equation*}
	A:=-\partial_s^2,\quad
	D=\bl\{\mbf{u}\in W_2^2[0,1]\mid \mbf{u}(0)=\mbf{u}(1)=0\br\}.
\end{equation*}
The associated PIE representation is defined by $\mcl{T}$ as in~\eqref{eq:Top_heatPDE} and $\mcl{A}=-I$.
Considering then $\mbf{v}_{n}(s):=-n\pi\sin(n\pi s)$, we have for each $n\in\N$ that $(\mcl{T}\mbf{v}_{n})(s)=\frac{1}{n\pi}\sin(n\pi s)$ and $\mcl{A}\mbf{v}_{n}=-\mbf{v}_{n}=n^2\pi^2\mcl{T}\mbf{v}_{n}$. It follows that
\begin{equation*}
	\ip{\mcl{T}\mbf{v}_{n}}{\mcl{A}\mbf{v}_{n}}_{L_{2,\mcl{P}}}
	=n^2\pi^2\ip{\mcl{T}\mbf{v}_{n}}{\mcl{T}\mbf{v}_{n}}_{L_{2,\mcl{P}}},\quad \forall n\in\N.
\end{equation*}
Thus, for any $\omega\in\R$ and coercive and bounded $\mcl{P}$, there exists $n\in\N$ such that $\ip{\mcl{T}\mbf{v}_{n}}{\mcl{A}\mbf{v}_{n}}_{L_{2,\mcl{P}}}>\omega \ip{\mcl{T}\mbf{v}_{n}}{\mcl{T}\mbf{v}_{n}}_{L_{2,\mcl{P}}}$.
We find that Condition 1) of Thm.~\ref{thm:LP_PIE_sufficient} cannot be satisfied, and the well-posedness LPI~\eqref{eq:LPI_wellposedness} is infeasible.

\subsection{Wave Equation}\label{subsec:Examples:Wave}

\subsubsection{Well-Posed Example} Now, consider the wave equation
\begin{align}\label{eq:wave}
	u_{tt}(t,s)&=u_{ss}(t,s), & t\ge0,\ s\in[0,1],\\
	u_t(t,0)&=u_s(t,1)=0.		\notag
\end{align}
To allow for well-posedness (on $L_{2}^{2}$), we consider a representation in terms of $\mbf{u}(t)=(\mbf u_1(t),\mbf u_2(t))=(u_t(t),u_s(t))$, expressing the PDE dynamics of $\mbf{u}$ as in~\eqref{eq:Cauchy} with
\begin{equation*}
	A=\bmat{0&\partial_s\\\partial_s&0},\quad
	D=\bbl\{\mbf{u}\in W_2^{1,2}[0,1]\,\bbl|\ \mat{\mbf{u}_1(0)=0\\ \mbf{u}_2(1)=0}\,\bbr\}.
\end{equation*}
Then, $A$ generates a contraction semigroup on $L_{2}^{2}[0,1]$.
For this system, the equivalent PIE is defined by
\begin{equation*}
(\mcl{T}\mbf{v})(s)=
\int_0^s
\bmat{1&0\\0&0}\mbf{v}(\theta)d\theta
-\int_s^1
\bmat{0&0\\0&1}\mbf{v}(\theta)d\theta,
\end{equation*}
with $\mcl{T}^{-1}=\partial_s$, and $\mcl{A}=A\circ\mcl{T}=\sbrray{cc}{0&1\\1&0}$. The optimization program~\eqref{eq:LPI_wellposedness} is feasible with $\omega=0$ and $\mcl{P}=I$, certifying that $A$ generates a contraction semigroup.

\subsubsection{Ill-Posed Example} For the same wave equation in~\eqref{eq:wave}, suppose we instead define the state $\mbf{u}(t)=(\mbf u_1(t),\mbf u_2(t))=(u(t),u_t(t))$ with
\begin{equation*}
A=\bmat{0&I\\\partial_s^2&0},\quad
D=\bbl\{\mbf{u}\in W_2^{2,2}[0,1]\,\bbl|\ \mat{\mbf{u}(0)=0\\ \mbf{u}_s(1)=0}\,\bbr\}.
\end{equation*}
Then, the resulting PDE in~\eqref{eq:Cauchy} is not well-posed, and LPI~\eqref{eq:LPI_wellposedness} is infeasible for the associated PIE representation. In particular, computing $\mcl{T}=(\partial_s^2)^{-1}$ and $\mcl{A}=A\circ\mcl{T}$, the resulting operator $\mcl{Q}:=(\lambda\mcl{T}-\mcl{A})(\lambda\mcl{T}-\mcl{A})^*$ for any $\lambda\in\R$ will be of the form
\begin{equation*}
	(\mcl{Q}\mbf{v})(s)=\bmat{0&0\\0&1}\mbf{v} +\int_{0}^{1}\mbs{K}(s,\theta)\mbf{v}(\theta)d\theta,
\end{equation*}
for some (semi-separable) kernel $\mbs{K}$. On the infinite-dimensional subspace where $\mbf{v}_{2}=0$, this operator $\mcl{Q}$ is compact, and therefore cannot satisfy $\mcl{Q}\succeq \epsilon I$ for any $\epsilon>0$. Thus, Condition 2) in Thm.~\ref{thm:LP_PIE_sufficient} is infeasible.

\subsection{Coupled PDEs}

To illustrate the application of Cor.~\ref{cor:LPI_wellposed} for verifying well-posedness of less classically well-studied PDEs, consider the following, entirely speculative, system of coupled PDEs
\begin{align*}
	\partial_{t}u_{1}(t,s)&=-\partial_{s}u_{1}(t,s)+s\partial_{t}u_{3}(t,1),	\\
	\partial_{t}u_{2}(t,s)&=\partial_{s}^{2}u_{2}(t,s)+s(2-s)u_{1}(t,1)+\partial_{s}u_{3}(t,0),	\\
	\partial_{t}^{2}u_{3}(t,s)&=\partial_{s}^{2}u_{3}(t,s)-\partial_{s}u_{2}(t,s),	\\
	u_{1}(t,0)&=u_{2}(t,1)=\partial_{t}u_{3}(t,0),\qquad u_{2}(t,0)=0,\\
	\partial_{s}u_{2}(t,1)&=\partial_{s}u_{3}(t,0),\hspace*{2.0cm} \partial_{s}u_{3}(t,1)=0.
\end{align*}
Introducing the state $\mbf{u}(t)=(u_{1}(t),u_{2}(t),\partial_{t}u_{3}(t),\partial_{s}u_{3}(t))$, this system can be formulated as in~\eqref{eq:Cauchy} using a suitable operator $A:D\to L_{2}^{4}[0,1]$ on the domain
\begin{equation*}
	D:=\bbbl\{\mbf{u}\in \slbmat{W_{2}^{1}[0,1]\\W_{2}^{2}[0,1]\\W_{2}^{1,2}[0,1]}\,\bbl|\ \slmat{\mbf{u}_{1}(0)=\mbf{u}_{2}(1)=\mbf{u}_{3}(0)\\ \mbf{u}_{2}(0)=\mbf{u}_{4}(1)=0\\ \partial_{s}\mbf{u}_{2}(1)=\mbf{u}_{4}(0)}\bbbr\}.
\end{equation*}
An explicit expression for $A$ and the operators $\{\mcl{T},\mcl{A}\}$ defining the associated PIE representation are omitted for brevity. Solving the LPI in~\eqref{eq:LPI_wellposedness}, we can verify that this PDE is well-posed, generating a $C_{0}$-semigroup with exponential growth rate $\omega=0$.

\section{Conclusion}

A PIE-based framework for verifying well-posedness of partial differential equations was developed. By reformulating the Lumer–Phillips generation theorem in terms of Partial Integral (PI) operators, necessary and sufficient conditions were established for a PIE to generate a quasicontraction semigroup. These conditions were relaxed to determine sufficient conditions for generation of a more general $C_{0}$-semigroup, with an upper bound on the exponential growth rate of solutions. Using these results, a convex optimization program on PI operator variables was developed for testing well-posedness of any PDE admitting an equivalent PIE representation. The resulting conditions can be implemented using PIETOOLS and solved using semidefinite programming. Illustrative examples demonstrated that the method can verify well-posedness and accurately estimate growth rates for several 1D PDE systems.

\bibliographystyle{IEEEtran}
\bibliography{bibfile}
%
%


\bigskip

\begin{appendices}
	
\section{Well-Posedness of Classical PDE Examples}\label{appx:PDE_examples}

In this appendix, we consider the classical PDE examples from Sec.~\ref{sec:Examples}, and verify well-posedness properties. In each case, we will demonstrate well-posedness by explicitly defining the $C_{0}$-semigroup generated by the PDE, or proving that such a $C_{0}$-semigroup does not exist.

\subsection{Transport Equation}

Consider first $A:=-\partial_{s}:D\to L_{2}[0,1]$ on
\begin{equation*}
	D:=\{\mbf{u}\in W_{2}^{1}[0,1]\mid \mbf{u}(0)=0\},
\end{equation*}
defining a transport equation with Dirichlet boundary condition at the lower boundary. 
A solution to the transport equation can be readily found using the method of characteristics, noting that $u_{t}(t,s)=-u_{s}(t,s)$ and $u(0,s)=\mbf{x}(s)$ imply $u(t,s)=\mbf{x}(s-t)$ for all $t\leq s$. Imposing $u(t,0)=0$, it follows that $A$ generates the $C_{0}$-semigroup
\begin{equation*}
	(e^{tA}\mbf{x})(s):=\begin{cases}
		\mbf{x}(s-t),	&	t\leq s,	\\
		0,		&	t>s.
	\end{cases}
\end{equation*}
Here, $\norm{e^{tA}\mbf{x}}_{L_{2}}\leq \norm{e^{\tau A}\mbf{x}}_{L_{2}}$ for all $t\geq \tau\geq 0$ and $\mbf{x}\in D$, whence $(e^{tA})_{t\geq 0}$ is a contraction semigroup. Furthermore since $\norm{e^{tA}\mbf{x}}_{L_{2}}=0$ for all $t\geq 1$ and $\mbf{x}\in D$, it follows that $\norm{e^{tA}\mbf{x}}_{L_{2}}\leq e^{\omega [t-1]}\norm{\mbf{x}}_{L_{2}}$ for any $\omega\in\R$ and all $t\geq 1$ and $\mbf{x}\in D$. Thus, the $C_{0}$-semigroup generated by $A$ has exponential growth rate $\omega$ for any $\omega\in\R$, with gain $M=e^{-\omega}$. 

However, consider now $A:=-\partial_{s}:\hat{D}\to L_{2}[0,1]$ on $\hat{D}:=\{\mbf{u}\in W_{2}^{1}[0,1]\mid \mbf{u}(1)=0\}$,
imposing a Dirichlet boundary condition on the upper boundary. In this case, a $C_{0}$-semigroup generated by $A$ would still need to satisfy $(e^{tA}\mbf{x})(s)=\mbf{x}(s-t)$ for $t\leq s$. However, we may choose $\mbf{x}\in \hat{D}$ and $t\in(0,1)$ such that $\mbf{x}(1-t)\neq 0$. Then $(e^{tA}\mbf{x})(1)=\mbf{x}(1-t)\neq 0$, implying that $e^{tA}\mbf{x}\notin \hat{D}$. 
Therefore, the associated transport equation is ill-posed.

\subsection{Heat Equation}

Next, consider $A:=\partial_{s}^2:D\to L_{2}[0,1]$ on
\begin{equation*}
	D:=\{\mbf{u}\in W_{2}^{2}[0,1]\mid \mbf{u}(0)=\mbf{u}(1)=0\},
\end{equation*}
defining the heat equation with Dirichlet boundary conditions. The unique solution to the heat equation can be readily established using separation of variables, finding that $A$ generates the $C_{0}$-semigroup with growth rate $\omega=-\pi^2$,
\begin{align*}
	(e^{tA}\mbf{x})(s)
	=\sum_{n=1}^{\infty}2\ip{\mbf{x}}{\mbs{\phi}_{n}}_{L_{2}}e^{-n^2\pi^2t}\mbs{\phi}_{n}(s),
\end{align*}
where $\mbs{\phi}_{n}(s):=\sin(n\pi s)$ for $n\in\N$. Consider now the reverse-time heat equation, defined by $\hat{A}:=-\partial_{s}^2:D\to L_{2}[0,1]$, on the same domain $D$. Then the $C_{0}$-semigroup generated by $\hat{A}$ would have to satisfy $e^{t\hat{A}}=e^{-tA}$.
However, $e^{-tA}\notin\mcl{L}(L_{2})$ for any $t>0$, since for any $M>0$, we can establish $n\in\N$ sufficiently large such that $e^{n^2\pi^2 t} >2M$, whence $\norm{e^{t\hat{A}}\mbs{\phi}_{n}}_{L_{2}}>M$. Thus, $\hat{A}:D\to L_{2}[0,1]$ does not generate a $C_{0}$-semigroup, implying the reverse-time heat equation is not well-posed.

\subsection{Wave Equation}

Consider the following wave equation,
\begin{align*}
	u_{tt}(t,s)&=u_{ss}(t,s),\qquad t\geq 0,~s\in[0,1],\\
	u(t,0)&=u_{s}(t,1)=0
\end{align*}
Solutions to this PDE are given by
\begin{align*}
	u(t,s)&:=\sum_{n=0}^{\infty}\bbbl(a_{n}\cos(\mu_{n}t)+\frac{b_{n}}{\mu_{n}}\sin(\mu_{n}t)\bbbr)\mbs{\phi}_{n}(s),
\end{align*}
where $\mu_{n}:=[n+\frac{1}{2}]\pi$ and $\mbs{\phi}_{n}(s):=\sin(\mu_{n}s)$ for $n\in\N$, and where
\begin{equation*}
	a_{n}:=2\ip{\mbs{\phi}_{n}}{u(0,\cdot)}_{L_{2}} \qquad
	b_{n}:=2\ip{\mbs{\phi}_{n}}{u_{t}(0,\cdot)}_{L_{2}}.
\end{equation*}
Letting $\mbs{\psi}_{n}(s):=\cos(\mu_{n}(s))$, it follows that
\begin{align*}
	u_{t}(t,s)&:=\sum_{n=0}^{\infty}\bl(-a_{n}\mu_{n}\sin(\mu_{n}t)+b_{n}\cos(\mu_{n}t)\br)\mbs{\phi}_{n}(s),	\\
	u_{s}(t,s)&:=\sum_{n=0}^{\infty}\bl(a_{n}\mu_{n}\cos(\mu_{n}t)+b_{n}\sin(\mu_{n}t)\br)\mbs{\psi}_{n}(s).
\end{align*} 

Now, to formulate the wave equation as in~\eqref{eq:Cauchy}, we may introduce the state $\mbf{u}(t)=\bmat{u_{t}(t)\\u_{s}(t)}$, defining
\begin{align*}
	&A:=\bmat{0&\partial_{s}\\\partial_{s}&0}:D\to L_{2}^{2}[0,1],	\\
	&D:=\{(\mbf{u}_{1},\mbf{u}_{2})\in W_{2}^{1,2}[0,1]\mid \mbf{u}_{1}(0)=\mbf{u}_{2}(1)=0\}.
\end{align*}
Then, the $C_{0}$-semigroup generated by $A$ is given by
\begin{align*}
	(e^{tA}\mbf{x})(s):=
	\sum_{n=0}^{\infty}\bmat{\bl(-a_{n}\mu_{n}\sin(\mu_{n}t)+b_{n}\cos(\mu_{n}t)\br)\mbs{\phi}_{n}(s)\\ \bl(a_{n}\mu_{n}\cos(\mu_{n}t)+b_{n}\sin(\mu_{n}t)\br)\mbs{\psi}_{n}(s)},
\end{align*}
where now $a_{n}:=\frac{2}{\mu_{n}}\ip{\mbs{\psi}_{n}}{\mbf{x}_{2}}_{L_{2}}$ and $b_{n}:=2\ip{\mbs{\phi}_{n}}{\mbf{x}_{1}}_{L_{2}}$ for $\mbf{x}=\bmat{\mbf{x}_{1}\\\mbf{x}_{2}}\in D$. 
By orthogonality of the basis functions, it follows that
\begin{align*}
	\norm{e^{tA}\mbf{x}}_{L_{2}}^2
	&=\sum_{n=0}^{\infty} \frac{1}{2} \bbl[ \bl(-a_{n}\mu_{n}\sin(\mu_{n}t)+b_{n}\cos(\mu_{n}t)\br)^2	\\[-0.4em]
	&\hspace*{1.75cm}+\bl(a_{n}\mu_{n}\cos(\mu_{n}t)+b_{n}\sin(\mu_{n}t)\br)^2\bbr]	\\
	&=\frac{1}{2}\sum_{n=0}^{\infty}  \bl[a_{n}^2\mu_{n}^2+b_{n}^2 \br]
	=\norm{\mbf{x}}_{L_{2}}^{2}.
\end{align*}
Hence, $e^{tA}$ is a contraction semigroup.

Alternatively, the wave equation can also be represented as in~\eqref{eq:Cauchy} by introducing $\mbf{u}(t)=\bmat{u(t)\\u_{t}(t)}$ and defining
\begin{align*}
	&A:=\bmat{0&I\\\partial_{s}^2&0}:D\to L_{2}^{2}[0,1],\\
	&D:=\bbbl\{\mbf{u}\in W_{2}^{2,2}[0,1]\,\bbl|\ \mat{\mbf{u}_{1}(0)=\partial_{s}\mbf{u}_{1}(1)=0\\\mbf{u}_{2}(0)=\partial_{s}\mbf{u}_{2}(1)=0}\bbbr\}.
\end{align*}
Then, if $A$ is the generator of a $C_{0}$-semigroup, this semigroup must satisfy
\begin{align*}
	(e^{tA}\mbf{x})(s):=
	\sum_{n=0}^{\infty}\bmat{a_{n}\cos(\mu_{n}t)+\frac{b_{n}}{\mu_{n}}\sin(\mu_{n}t)\\ -a_{n}\mu_{n}\sin(\mu_{n}t)+b_{n}\cos(\mu_{n}t)}\mbs{\phi}_{n}(s),
\end{align*}
where now $a_{n}:=2\ip{\mbs{\phi}_{n}}{\mbf{x}_{1}}_{L_{2}}$ and $b_{n}:=2\ip{\mbs{\phi}_{n}}{\mbf{x}_{2}}_{L_{2}}$ for $\mbf{x}=\bmat{\mbf{x}_{1}\\\mbf{x}_{2}}\in D$. 
By orthogonality of the basis functions, it follows that
\begin{align*}
	\norm{e^{tA}\mbf{x}}_{L_{2}}^2
	&=\sum_{n=0}^{\infty} \frac{1}{2} \bbbl[\bbbl(a_{n}\cos(\mu_{n}t)+\frac{b_{n}}{\mu_{n}}\sin(\mu_{n}t)\bbbr)^2 	\\[-0.4em]
	&\hspace*{1.5cm}+\bl(-a_{n}\mu_{n}\sin(\mu_{n}t)+b_{n}\cos(\mu_{n}t)\br)^2\bbbr].	
\end{align*}
However, consider now $\mbf{x}(s)=\bmat{\sin(\mu_{m}s)\\0}$ for some $m\in\N$, so that $b_{n}:=0$ for all $n\in\N$, and $a_{n}=0$ for all $n\in\N\setminus\{m\}$, with $a_{m}=1$. Then
\begin{align*}
	\norm{e^{tA}\mbf{x}}_{L_{2}}^2
	&=\frac{1}{2} \bbl[\cos^2(\mu_{m}t) +\mu_{m}^2\sin^2(\mu_{m}t)\bbr],
\end{align*}
whence $\norm{\mbf{x}}_{L_{2}}^2=\frac{1}{2}$ and $\norm{e^{tA}\mbf{x}}_{L_{2}}^2|_{t=1}=\frac{1}{2}\mu_{m}^2$. It follows that, for any $M\geq 1$ and $\omega\in\R$, we can find sufficiently large $m\in\N$ such that \[
\norm{e^{tA}\mbf{x}}_{L_{2}}^2|_{t=1}=\frac{1}{2}\mu_{m}^2>M^2e^{2\omega}\norm{\mbf{x}}_{L_{2}}^2.
\]
However, if $A$ generated a $C_{0}$-semigroup, there would exist $M\geq 1$ and $\omega\in\R$ such that
\[
\norm{e^{tA}\mbf{x}}_{L_{2}}\leq Me^{\omega t}\norm{\mbf{x}}_{L_{2}},\quad \forall t\geq 0.
\]
This contradicts the preceding inequality. Therefore, the considered operator $A:D\to L_{2}^{2}$ cannot be the infinitesimal generator of a $C_{0}$-semigroup, and hence this formulation of the wave equation is not well-posed.

\end{appendices}

\end{document}